\newtheorem{theorem}{Theorem}
\newtheorem{definition}{Definition}
\newtheorem{lemma}{Lemma}
\newtheorem{proposition}{Proposition}
\newtheorem{corollary}{Corollary}
\newtheorem{example}{Example}
\newtheorem{remark}{Remark}
\title{Irreducibility of configurations}           
\author{Klara Stokes}     
\date{University of Sk\"ovde, School of Engineering Sciences, \\54128 Sk\"ovde, Sweden\footnote{This article was partially written during postdoc positions at Universitat Oberta de Catalunya, Barcelona, Spain and Link\"oping University, Link\"oping, Sweden}          
}    
\begin{document}
\maketitle                     


\begin{abstract}
In a paper from 1886, Martinetti enumerated small $v_3$\hyp{}configurations. One of his tools was a construction that permits to produce a $(v+1)_3$\hyp{}configuration from a  $v_3$\hyp{}configuration. 
He called configurations that were not constructible in this way irreducible configurations. According to his definition, the irreducible configurations are Pappus' configuration and four infinite families of configurations.   
In 2005, Boben defined a simpler and more general definition of irreducibility, for which only two $v_3$\hyp{}configurations, the Fano plane and Pappus' configuration, remained irreducible. 
The present article gives a generalization of Boben's reduction for both balanced and unbalanced $(v_r,b_k)$\hyp{}configurations, and proves several general results on augmentability and reducibility. 
Motivation for this work is found, for example, in the counting and enumeration of configurations. 
\end{abstract}


\section{Introduction}
An incidence geometry is a triple $(P,L,I)$  where $P$ is a set of 'points', $L$ is a set of 'blocks', and $I$ is an incidence relation between the elements in $P$ and $L$. The line spanned by two points $p_1$ and $p_2$ is the intersection of all blocks containing both $p_1$ and $p_2$. 
When there are at most one block containing $p_i$ and $p_j$ for all pairs of points, then we may identify the blocks with the lines. Incidence geometries with this property are called partial linear spaces. 

If a point $p$ and a line $l$  are incident, then we say that $l$ goes through $p$, or that $p$ is on $l$. 
We say that a pair of lines that goes through the same point $p$ meet or intersect in $p$.  

A combinatorial configuration is a partial linear space in which there are $r$ lines through every point and $k$ points on every line~\cite{Davydov,Gropp,Grunbaum}.  
We will use the notation $(v_r,b_k)$\hyp{}configuration to refer to a combinatorial configuration 
with $v$ points, $b$ lines, $r$ lines through every point and  $k$ points on every line. 
The four parameters $(v_r,b_k)$ are redundant so that there is only need for the three parameters $(d,r,k)$,  where
$d:=\frac{v\gcd(r,k)}{k}=\frac{b\gcd(r,k)}{r}=\frac{vr}{\operatorname{lcm}(r,k)}=\frac{bk}{\operatorname{lcm}(r,k)}$ is an integer associated to the configuration that determines the number of points and lines. We will refer to $(d,r,k)$ as the reduced parameter set of the $(v_r,b_k)$\hyp{}configuration. 
When $v$ and $b$ are not known or not important, we will also use the notation $(r,k)$\hyp{}configuration.

We say that a  configuration is balanced if $r=k$. This implies that the number of points equals the number of lines and the associated integer, so $d=v=b$. In this case, we will use the notation $v_k$\hyp{}configuration. In the literature, configurations with this property are also called symmetric.  
When the configuration is unbalanced, i.e. when  $r\neq k$, then $v$, $b$ and $d$ are all different. Examples of balanced and unbalanced configurations are given in Figure~\ref{fig:conf}.

\begin{figure}
\begin{center}
\begin{tabular}{cccc}
\resizebox{.2\textwidth}{!}{

\begin{pspicture}(0,0)(4, 3.48)
  \psline[showpoints=true](0,0)(1,1.73204)(2,3.4641)
  \psline[showpoints=true](0,0)(2,0)(4,0)
  \psline[showpoints=true](4,0)(3,1.73204)(2,3.4641)
  \pscircle(2,1.1547){1.1547}
  \psline[showpoints=true](0,0)(2,1.1547)(3,1.73204)
  \psline[showpoints=true](4,0)(2,1.1547)(1,1.73204)
  \psline[showpoints=true](2,3.4641)(2,1.1547)(2,0)
  \end{pspicture}
}
&
\resizebox{.2\textwidth}{!}{
\begin{pspicture}(0.5, 0.5)(4.5, 4.5)
\psline[showpoints=true](0.5,1.5)(4.5,0.5)
\psline[showpoints=true](0.5,3.5)(4.5,4.5)
\psline[showpoints=true](1.3, 2.5)(1.85,2.5)(3.35,2.5)
\psline[showpoints=true](0.5, 1.5)(4.5, 4.5)
\psline[showpoints=true](0.5, 3.5)(4.5, 0.5)
\psline[showpoints=true](2.5,4)(4.5,0.5)
\psline[showpoints=true](0.5,1.5)(2.5,4)
\psline[showpoints=true](2.5,1)(0.5,3.5)
\psline[showpoints=true](2.5,1)(4.5,4.5)
\end{pspicture}
}
&
\resizebox{0.2\textwidth}{!}{
\begin{pspicture}(0,0)(4,4)
\psline[showpoints=true](1,1)(1,2)(1,3)
\psline[showpoints=true](2,1)(2,2)(2,3)
\psline[showpoints=true](3,1)(3,2)(3,3)
\psline[showpoints=true](1,1)(2,1)(3,1)
\psline[showpoints=true](1,2)(2,2)(3,2)
\psline[showpoints=true](1,3)(2,3)(3,3)
\pscurve(1,1)(2,2)(3,3)
\pscurve(1,2)(2,3)(3.5,3.5)(3,1)
\pscurve(3,2)(2,1)(0.5,0.5)(1,3)
\pscurve(3,1)(2,2)(1,3)
\pscurve(2,1)(1,2)(0.5,3.5)(3,3)
\pscurve(2,3)(3,2)(3.5,0.5)(1,1)
\end{pspicture}
}
&
\resizebox{0.2\textwidth}{!}{
\begin{pspicture}(0,0)(4,4)
\psline[showpoints=true](0.5,0.5)(0,2)(0.5,3.5)(2,4)(3.5,3.5)(4,2)(3.5,0.5)(2,0)(0.5,0.5)
\psline(0.5,0.5)(0.5,3.5)
\psline(0.5,0.5)(2,4)
\psline(0.5,0.5)(4,2)
\psline(0.5,0.5)(3.5,0.5)
\psline(0.5,0.5)(2,0)
\psline(0,2)(2,4)
\psline(0,2)(3.5,3.5)
\psline(0,2)(3.5,0.5)
\psline(0,2)(2,0)
\psline(0.5,3.5)(3.5,3.5)
\psline(0.5,3.5)(4,2)
\psline(0.5,3.5)(2,0)
\psline(2,4)(4,2)
\psline(2,4)(3.5,0.5)
\psline(3.5,3.5)(3.5,0.5)
\psline(3.5,3.5)(2,0)
\psline(4,2)(2,0)
\end{pspicture}
}
\end{tabular}
\end{center}
\caption{Examples of balanced and unbalanced configurations. From left to right: 
Fano plane ($v=b=d=7$ $r=k=3$),  
Pappus' configuration ($v=b=d=7$ $r=k=3$), 
Affine plane of order $3$ ($v=9$ $b=12$ $d=3$ $r=4$ $k=3$), 
$6$\hyp{}regular graph on 8 vertices ($v=8$ $b=24$ $d=8$ $r=6$ $k=2$)}
\label{fig:conf}
\end{figure}
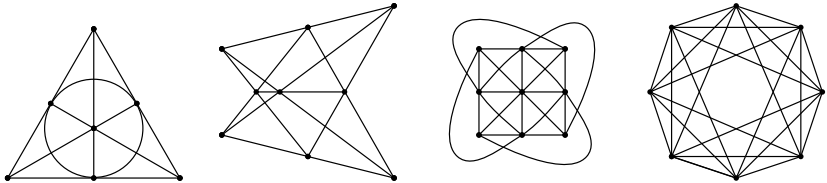
The following necessary conditions for the existence of configurations are well-known. 
\begin{lemma}
The lower bound of the number of points $v$ of an $(r,k)$\hyp{}configuration is $v\geq r(k-1)+1$, and the lower bound of the number of lines $b$ is $b\geq k(r-1)+1$. 
Also, the parameters $v,b,r,k$ always satisfy $vr=bk$. 
\end{lemma}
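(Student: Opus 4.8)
The plan is to establish the three assertions by elementary double counting together with the defining property of a partial linear space, namely that two distinct points lie on at most one common line.

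First I would prove the relation $vr = bk$ by counting the set of flags, i.e.\ the incident point--line pairs $(p,\ell)$, in two ways. Summing over points, each of the $v$ points lies on exactly $r$ lines, which gives $vr$ flags; summing over lines, each of the $b$ lines carries exactly $k$ points, which gives $bk$ flags. Equating the two counts yields $vr = bk$.

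Next, for the bound $v \ge r(k-1)+1$, I would fix an arbitrary point $p$. By hypothesis there are exactly $r$ lines $\ell_1,\dots,\ell_r$ through $p$, and each $\ell_i$ contains $k-1$ points other than $p$. The crucial observation is that these $r(k-1)$ points are pairwise distinct and distinct from $p$: if a point $q\neq p$ were to lie on two of the lines $\ell_i$ and $\ell_j$ with $i\neq j$, then $p$ and $q$ would be two distinct points lying on two distinct common lines, contradicting the partial linear space axiom. Hence $P$ contains at least $1 + r(k-1)$ points. The bound $b \ge k(r-1)+1$ is the exact dual statement, obtained by the dual argument: fix a line $\ell$ with its $k$ points $p_1,\dots,p_k$; through each $p_i$ there are $r-1$ lines other than $\ell$, and these $k(r-1)$ lines are pairwise distinct and distinct from $\ell$, since a line $m\neq\ell$ passing through two of the $p_i$ would again produce two points on two common lines. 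Thus $L$ has at least $1 + k(r-1)$ elements.

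The computations here are routine; the only step that requires any care — and the one place where the hypotheses are used beyond the bare counting of incidences — is the distinctness claims, which rest precisely on the fact that a configuration is by definition a partial linear space. I therefore expect no real obstacle, only the need to state the partial linear space property carefully at the point where distinctness is invoked.
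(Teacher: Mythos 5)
Your proposal is correct. The paper states this lemma as well-known and gives no proof of its own, so there is nothing to compare against; your argument --- double counting of flags for $vr=bk$, and counting the $r(k-1)$ points collinear with a fixed point $p$ (distinct by the partial linear space axiom) together with its dual for the line bound --- is exactly the standard proof one would supply.
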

We say that a parameter set satisfying these two conditions are admissible. 
In general it is difficult to, given some admissible parameter set, determine if there exists some combinatorial configuration with these parameters. 
If this is the case, then we say that the parameter set is configurable.  
The (point) deficiency of a configuration with parameters $(v_r,b_k)$ is the difference $\delta_p=v-[r(k-1)+1]$, and the line deficiency is the difference $\delta_l=b-[k(r-1)+1]$. 
In balanced configurations the two deficiencies are equal. 

In 1886 Martinetti studied the construction of $v_3$\hyp{}configurations through the addition of a point and a line to existing $v_3$\hyp{}configurations~\cite{martinetti_1887,Grunbaum}. 
The construction is as follows.
Start with a $v_3$\hyp{}configuration and
assume that there are two parallel lines $\{a,b,c\}$ and $\{a',b',c'\}$ such that  $a$ and $a'$ are not collinear.
Add a point $p$ and replace the two parallel lines with the lines $\{p,b,c\}$, $\{p,b',c'\}$, $\{p,a,a'\}$. 
The result is a $(v+1)_3$\hyp{}configuration.
This construction is illustrated in Figure~\ref{fig:3}. 
We call such a construction a (Martinetti) augmentation. 
The inverse construction gives the smaller configuration from the larger one through the removal of one  point and one line. 
We call the inverse construction a (Martinetti) reduction. 
Martinetti called a configuration irreducible if it could not be constructed from another configuration through an augmentation. In other words, a configuration is irreducible if it does not allow a reduction.


In Martinetti's original paper he gave two infinite families of irreducible $v_3$\hyp{}configurations. One consisted of the cyclic configurations with base line $\{0,1,3\}$, starting with the smallest $v_3$\hyp{}configuration, the Fano plane. There is therefore at least one irreducible $v_3$ configuration for each $v\geq 7$. The other family gives one irreducible $(10n)_3$\hyp{}configuration for each $n\in\mathbb{Z}$, starting with Desargues' configuration. Martinetti claimed that these families of configurations were the only irreducible $v_3$\hyp{}configurations, with the addition of three sporadic examples for $v\leq 10$; more precisely,  Pappus' $(9_3)$\hyp{}configuration and two other $10_3$\hyp{}configurations.   
In 2007, Boben published a correction of this list, in which the two sporadic irreducible $10_3$\hyp{}configurations were shown to be the first elements in two additional infinite families of irreducible $(10n)_3$\hyp{}configurations, showing that there are four infinite families of irreducible $v_3$\hyp{}configurations~\cite{boben_graphs}. 

\begin{theorem}[Martinetti - Boben]
The list of (Martinetti) irreducible configurations are 
\begin{itemize}
\item the cyclic configurations with base line $\{0,1,3\}$. The smallest configuration in this family is the Fano plane,
\item the three infinite families $T_1(n)$, $T_2(n)$, $T_3(n)$, on $10n$ points. The smallest configuration in $T_1(n)$ is Desargues' configuration, and 
\item Pappus' configuration. 
\end{itemize}
\end{theorem}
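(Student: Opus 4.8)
First I would recast Martinetti reducibility as a local condition on the incidence structure, so that it becomes mechanically checkable. Unwinding the augmentation described above, a $v_3$\hyp{}configuration $C$ admits a (Martinetti) reduction exactly when it has a point $p$ lying on three lines $\ell_1,\ell_2,\ell_3$ for which the two points of $\ell_3$ other than $p$ can be labelled $a,a'$, the two points of $\ell_1$ other than $p$ labelled $b,c$, and the two points of $\ell_2$ other than $p$ labelled $b',c'$, so that $a$ is collinear with neither $b$ nor $c$ and $a'$ is collinear with neither $b'$ nor $c'$: these non-collinearities are exactly what makes deleting $p$ and $\ell_3$ and replacing $\ell_1,\ell_2$ by $\{a,b,c\}$ and $\{a',b',c'\}$ produce a partial linear space of the right regularity, while the other requirements of the construction (the six points $a,b,c,a',b',c'$ being distinct, the two new lines being parallel, and $a,a'$ becoming non-collinear) hold automatically since $C$ is a partial linear space. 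Thus at each point there are at most six potential reductions to test, and \emph{irreducibility} means that in every one of them some ``blocking'' collinearity occurs; equivalently, the (cubic, girth at least $6$) Levi graph of $C$ avoids a short list of local patterns. The theorem then has two halves.

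\emph{The listed configurations are irreducible.} For each family I would verify the local criterion on representatives of the flag orbits of the automorphism group, which is large in every case. For the cyclic configurations with base line $\{0,1,3\}$ the group $\mathbb{Z}_v$ acts regularly on points, so it suffices to inspect the point $0$; its three lines are $\{0,1,3\}$, $\{-1,0,2\}$ and $\{-3,-2,0\}$, and a short computation shows that for every admissible choice of $\ell_3$ and every pairing, one of the two points of the chosen $\ell_1$ differs from the chosen point $a$ by $1$, $2$ or $3$ modulo $v$, hence is collinear with it; the finitely many small $v$ for which $5$ or $6$ is also a ``collinear'' difference are dispatched separately. The families $T_1$, $T_2$, $T_3$ are treated the same way from their (cyclic or near-cyclic) descriptions, and Desargues' configuration $T_1(1)$ and Pappus' configuration, being small, can also be checked by direct inspection.

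\emph{Every irreducible $v_3$\hyp{}configuration lies in the list.} I would split on $v$. For $v\le 10$ one enumerates all $v_3$\hyp{}configurations --- there are $1,1,3,10$ of them for $v=7,8,9,10$ --- and applies the local criterion to each; the irreducible ones are precisely the small members of the list: the cyclic $\{0,1,3\}$\hyp{}configurations for $7\le v\le 10$ (the $v=8$ one being the M\"obius--Kantor configuration), Pappus' configuration, Desargues' configuration $T_1(1)$, and the two further $10_3$\hyp{}configurations $T_2(1)$ and $T_3(1)$. For $v\ge 11$ one argues structurally: assuming $C$ irreducible, the failure of every local reduction together with the identity $vr=bk$ and the girth condition on the Levi graph forces a very rigid behaviour of the collinearity relation around each point, hence of the parallel classes of lines, and one shows that the only configurations that sustain this pattern at every point are the cyclic $\{0,1,3\}$ configurations and the periodic families $T_1$, $T_2$, $T_3$.

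\emph{The main obstacle} is this last step. The rigidity analysis for large $v$ involves many case distinctions on how the forced collinearities can surround a point and on how the resulting local patterns glue into a global configuration, and it is exactly here that Martinetti's 1886 argument was incomplete, overlooking the families $T_2$ and $T_3$. In practice the gap is closed by pushing the combinatorial constraints as far as they go, running an exhaustive search over the finitely many configurations up to a moderate bound, and then proving that every surviving example belongs to one of the periodic families and that the periodicity propagates, so that no new irreducible configuration can appear for larger $v$; making the recognition and propagation steps rigorous is the delicate heart of the proof.
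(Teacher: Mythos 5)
The paper does not prove this theorem: it is quoted as background, with the classification attributed to Martinetti \cite{martinetti_1887} as corrected and completed by Boben \cite{boben_graphs}, so there is no in-paper argument to compare yours against. Your reformulation of Martinetti reducibility as a local condition is correct and correctly derived: a reduction by $(p,\ell_3)$ exists iff the two points $a,a'$ of $\ell_3$ other than $p$ can be matched with the two other lines $\ell_1,\ell_2$ through $p$ so that $a$ is collinear with neither of the two remaining points of its partner line, and likewise for $a'$; the distinctness of the six points and the parallelism and non-collinearity conditions you claim to be automatic do indeed follow from partial linearity, and the count of six candidate reductions per point is right. The verification that the listed families satisfy the blocking condition at every flag is also routine along the lines you describe, using the transitive group actions.

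The genuine gap is the converse for $v\ge 11$, which is the entire content of the theorem. Your sentence ``the failure of every local reduction \dots forces a very rigid behaviour \dots and one shows that the only configurations that sustain this pattern are \dots'' is a restatement of the conclusion, not an argument, and ``proving that every surviving example belongs to one of the periodic families and that the periodicity propagates'' names the hard step without supplying it. This is exactly where Martinetti's own 1887 argument was defective --- he overlooked $T_2$ and $T_3$ --- so the point cannot be waved at. What is actually required is a case analysis of how the blocking collinearities can be arranged around each point, showing that irreducibility forces every point into one of a short list of local patterns (Boben carries this out on the cubic, girth at least $6$ Levi graph, where the obstruction forces a decomposition into specific subgraphs that can only be chained together cyclically), followed by a proof that the only consistent global gluings of these patterns are the four families. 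As it stands your proposal is an accurate roadmap of the published proof, but the delicate heart you identify is precisely the part that remains unproved.
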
 

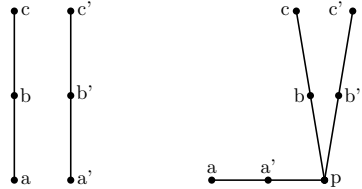
\begin{figure}
\begin{center}

\resizebox{0.2\textwidth}{!}{
\begin{pspicture}(0,0)(3,6)
\psline[showpoints=true](0,0)(0,1.5)(0,3)
\rput(0.2,0){a}
\rput(0.2,1.5){b}
\rput(0.2,3){c}
\psline[showpoints=true](1,0)(1,1.5)(1,3)
\rput(1.25,0.05){a'}
\rput(1.25,1.55){b'}
\rput(1.25,3.05){c'}
\psline[showpoints=true](5.5,0)(5.25,1.5)(5,3)
\rput(5.7,0){p}
\rput(5.05,1.5){b}
\rput(4.8,3){c}
\psline[showpoints=true](5.5,0)(4.5,0)(3.5,0)
\rput(4.5,0.25){a'}
\rput(3.5,0.2){a}
\psline[showpoints=true](5.5,0)(5.75,1.5)(6,3)
\rput(6,1.5){b'}
\rput(5.7,3.05){c'}
\end{pspicture}
}
\end{center}
\caption{Martinetti's augmentation construction. To the left the two parallel lines in the original $v_3$\hyp{}configuration, to the right the new incidences in the constructed $(v+1)_3$\hyp{}configuration. }
\label{fig:3}
\end{figure}

It results that, of several possible constructions of $(v+1)_3$ configurations from $v_3$\hyp{}configurations, Martinetti's construction is just one example. In 2000, Carstens et al. presented a rather complex set of reductions for which they claimed that the only irreducible configuration was the smallest $v_3$ configuration - the Fano plane~\cite{carstens}. 
However, in 2003, Ravnik used computer calculations to show that (at least) the Desargues configuration is also irreducible with respect to this set of reductions~\cite{erratum}. 
In 2005, Boben presented a simpler definition of reduction in terms of the Levi graph of the configuration. 
The Levi graph is a lossless representation of the incidences of the points and lines in form of a bipartite graph of girth at least six, and if $r=k=3$, then it is a cubic graph. 
In  \cite{boben_arxiv}, a reduction by the point $p$ and the line $l$ of the $v_3$\hyp{}configuration with Levi graph $G_v$ is defined as the Levi graph $G_{v-1}$ of a $(v-1)_3$\hyp{}configuration obtained from $G_v$ by removing the point vertex $p$ and the line vertex $l$ from $G_v$ and then connecting their neighbors in such way that the result remains cubic and bipartite. We call this construction a (Boben) reduction. A configuration is (Boben) irreducible if it does not admit a (Boben) reduction. 
%
%
According to Boben, with respect to this reduction, there are only two irreducible $v_3$\hyp{}configurations. 
\begin{theorem}[Boben]
The only (Boben) irreducible $v_3$\hyp{}configurations are the Fano plane and the Pappus configuration. 
\end{theorem}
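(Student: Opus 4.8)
The plan is to recast a Boben reduction as a local combinatorial move, reduce to finitely many small configurations by a counting argument, and then treat the Fano plane and Pappus' configuration by hand; this broadly follows Boben's argument in \cite{boben_arxiv}, made explicit.

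First I would record a combinatorial description of the move. Let $C$ be a $v_3$-configuration with Levi graph $G$, and fix a point $p$, a line $l$, and a bijection $\sigma$ between the lines $l_1,l_2,l_3$ through $p$ and the points $q_1,q_2,q_3$ of $l$ (if $p\in l$, replace ``three'' by ``two''); the reduction deletes the vertices $p$ and $l$ and inserts the edges $q_{\sigma(i)}l_i$. Since $G$ is bipartite of girth at least $6$ and the only new edges run between line-vertices $l_i$ and point-vertices $q_j$, the outcome is again the Levi graph of a configuration exactly when no new cycle of length $2$ or $4$ is created. There are only three ways such a cycle can arise: a doubled incidence ($q_{\sigma(i)}\in l_i$ already in $C$); a quadrilateral through two new edges ($q_{\sigma(i)}\in l_j$ and $q_{\sigma(j)}\in l_i$ for some $i\ne j$); and a quadrilateral through one new edge, i.e.\ a length-$3$ path from $q_{\sigma(i)}$ to $l_i$ in $C$ avoiding $p$ and $l$ (equivalently, $q_{\sigma(i)}$ lies, via a line other than $l$, on a line meeting $l_i$ in a point $\ne p$). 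So $C$ is irreducible if and only if every triple $(p,l,\sigma)$ triggers one of these three exclusions.

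Irreducibility of the Fano plane is then immediate: a reduction would yield a $6_3$-configuration, impossible since $v\ge r(k-1)+1=7$. For the converse I would show that every $v_3$-configuration with $v$ above a small explicit threshold is reducible. Fix a point $p$; on $l_1,l_2,l_3$ there sit six further points, and the exclusions above forbid, for each slot $i$, only that $q_{\sigma(i)}$ lie among the boundedly many points collinear with one of the two non-$p$ points of $l_i$, plus a mild ``crossing'' restriction on how $\sigma$ distributes the $q$'s among $l_1,l_2,l_3$. A Hall-type counting argument then produces a line $l\not\ni p$ and a bijection $\sigma$ avoiding all exclusions once $v$ passes the threshold, the crossing condition being repairable afterwards by composing $\sigma$ with a transposition. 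Only finitely many configurations lie below the threshold; these---together with the unique $8_3$-configuration (the M\"obius--Kantor configuration, which reduces to the Fano plane) and the two $9_3$-configurations other than Pappus---are checked directly, by hand or machine, to be reducible.

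The remaining, genuinely delicate point is that Pappus' configuration admits no reduction. Here I would use that its Levi graph, the Pappus graph, is distance-transitive with intersection array $\{3,2,2,1;1,1,2,3\}$: fixing $p$, the line $l$ is determined up to automorphism by the distance $d(p,l)\in\{1,3\}$, and in each of these two cases the stabilizer of the pair $(p,l)$ still acts on the handful of admissible bijections $\sigma$, leaving a short explicit list. For every entry one checks that the prescribed new edges close up a quadrilateral with edges already present---one of the three exclusions always fires---using the intersection numbers to control how the neighbourhoods of $p$ and $l$ overlap. A useful cross-check: since the $8_3$-configuration is unique, any reduction of Pappus would exhibit the Pappus graph as an augmentation (the inverse move: delete three independent edges, insert a new point--line pair) of the M\"obius--Kantor graph, so it is enough instead to verify that none of the augmentations of the M\"obius--Kantor graph---finitely many, up to its order-$96$ automorphism group---is the Pappus graph. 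I expect the two real obstacles to be bookkeeping: keeping the counting threshold low enough that the residual check is genuinely small, and organising the Pappus verification so that distance-transitivity really collapses it to a few cases rather than a search.
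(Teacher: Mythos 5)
The paper states this theorem without proof --- it is quoted from Boben \cite{boben_arxiv} --- so there is no internal proof to compare against; the closest internal material is Lemma~\ref{thm:1} (which covers the Fano case via deficiency), the transversal-design lemma (which covers the Pappus case much more cheaply than your distance-transitivity analysis: every point of every line of $TD_1(3,3)$ is collinear with two of the three points of each line through $p$, which is more than the one removed point and one removed line a reduction may excuse), and the final lemma on large configurations, which is exactly your counting step and yields reducibility only for $v\ge 64$ when $r=k=3$. Your local reformulation of the move and the three exclusions is correct and agrees with Definition~\ref{def:irr_bal} and Corollary~\ref{cor:irr}, and your treatments of the Fano plane and of Pappus would go through.

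The genuine gap is the middle of the argument. The count of lines meeting the ball of radius two around $p$ gives a threshold of $v\ge 64$ (as the paper itself notes, adding that the bound is not sharp), and even an optimistic Hall-type refinement of the sort you sketch leaves a threshold far above $10$; but the number of $v_3$-configurations grows super-exponentially (already $2036$ at $v=13$, roughly $2.5\times 10^{5}$ at $v=15$, over $5\times 10^{8}$ at $v=18$), so ``check the finitely many configurations below the threshold by hand or machine'' is not a step that can actually be executed unless the threshold is first driven down to $v=10$. Driving it down to $10$ is the entire content of the theorem: Boben's proof is a structural argument on cubic bipartite graphs of girth at least six, locating a reducible pair $(p,l)$ directly in every such graph on $2v\ge 20$ vertices by analysing short cycles near a chosen vertex, not a counting bound plus residual enumeration. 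As written, your proposal defers exactly this hardest part to ``bookkeeping'', so it does not close.
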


This article presents a generalization of Boben's reduction to $(r,k)$\hyp{}configurations for any  $r,k\geq 2$, elaborates on the augmentation of $v_3$ and $v_4$\hyp{}configurations and provides some results that ensure irreducibility or reducibility in the general case.  
Augmentation and reductions of configurations are particularly interesting for the purpose of counting  configurations. 

\section{Reducibility of balanced configurations}
Balanced configurations are better studied than unbalanced configurations. This section presents results on augmentation and reduction constructions for balanced configurations. 

\subsection{Augmentation of balanced configurations}
The construction presented next is an augmenting construction for balanced $v_k$\hyp{}configurations. 
\begin{definition}
\label{def:augm_bal}
Let $C_v=(P,L,I)$ be a $v_k$\hyp{}configuration. 
Assume that there is a subset of $k$ points $Q\subseteq P$ and a subset of $k$ lines $M\subseteq L$,  such that 
\begin{itemize}
\item there is a bijection $f:Q\rightarrow M$ such that the image of a point $q$ is a line $f(q)$ through that point,
\item two points $q,q'\in Q$ either are not collinear, or are collinear only on the line $f(q)$ or on the line $f(q')$, and´
\item two lines $m,m'\in M$  either do not meet, or meet only in the point $f^{-1}(m)$ or in the point $f^{-1}(m')$. 
\end{itemize}  
Then there is a $(v+1)_k$\hyp{}configuration $C_{v+1}$, constructed from $C_v$ through the following augmentation procedure:

\noindent For all $q$ in $Q$, replace each incidence $(q,f(q))$ with 
\begin{itemize}
\item the incidence $(p,f(q))$, where $p$ is a new point, and 
\item the incidence $(q,l)$, where $l$ is a new line. 
\end{itemize}
\end{definition}
\begin{proposition}
\label{prop:augm_bal}
The result of the above construction is a $(v+1)_k$\hyp{}configuration $C_{v+1}$. 
\end{proposition}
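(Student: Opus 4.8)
The plan is to verify the three conditions that make $C_{v+1}$ a $(v+1)_k$-configuration: that it has $v+1$ points and $v+1$ lines, that it is $k$-regular on both sides, and that it is a partial linear space. The first two are bookkeeping. Since $C_v$ is balanced it has $v$ points and $v$ lines, and the procedure adds exactly one point $p$ and one line $l$. For regularity I would check each type of element: $p$ is incident with the $k$ lines $f(q)$, $q\in Q$, which are distinct because $f$ is a bijection, and with nothing else; dually $l$ contains the $k$ distinct points of $Q$; an old point $q\in Q$ trades the incidence $(q,f(q))$ for $(q,l)$ and no other incidence at $q$ is disturbed (the only incidence the procedure deletes at a line $f(q')\in M$ involves $q'$, not $q$), so its degree is unchanged, and symmetrically an old line $m=f(q)\in M$ trades $q$ for $p$; points outside $Q$ and lines outside $M$ are untouched. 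Hence every point lies on $k$ lines, every line carries $k$ points, and the incidence count is $(v+1)k$ on both sides.

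The substance is the partial linear space property, which I would establish by showing that any two lines of $C_{v+1}$ meet in at most one point (equivalently, at most one line passes through any two points). The pivotal preliminary observation is that, already in $C_v$, each line $m\in M$ contains at most two points of $Q$, namely $f^{-1}(m)$ together with at most one other point: if there were two further points $q_1\ne q_2$ of $Q$ on $m$, then $q_1,q_2$ would be collinear on $m$, so by the second condition of Definition~\ref{def:augm_bal} the line through them, i.e. $m$ itself, would have to be $f(q_1)$ or $f(q_2)$, contradicting the injectivity of $f$. Dually, the third condition yields that each point of $Q$ lies on at most two lines of $M$; equivalently, two distinct lines $f(q_1),f(q_2)\in M$ cannot share a point other than possibly $q_1$ or $q_2$.

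With this in hand I would run through the pairs of lines of $C_{v+1}$ by type. Two old lines neither of which is in $M$ are unchanged, so they meet as in $C_v$. If $m\in M$ and $m'$ is an old line not equal to $l$ and not in $M$, then $m'$ is unchanged, the only point added to $m$ is $p\notin m'$, and $m$ moreover lost $f^{-1}(m)$, so $m\cap m'$ is contained in the old intersection. If $m,m'\in M$, the third condition puts their intersection in $C_v$ inside $\{f^{-1}(m),f^{-1}(m')\}$, both of which are deleted by the procedure (from $m$ and from $m'$ respectively), while the only point added to each is $p$; hence $m\cap m'=\{p\}$ in $C_{v+1}$. If one line is $l$ and the other an old line $m'\notin M$, then $l$ meets only points of $Q$ while $m'$ carries at most one such point (two points of $Q$ on $m'$ would be collinear on $m'$, forcing $m'\in M$), so they meet at most once. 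Finally, if one line is $l$ and the other is $m\in M$, then $l\cap m=(Q\cap m)\setminus\{f^{-1}(m)\}$ since $l$ meets only points of $Q$ and $p\notin Q$, and by the preliminary observation $Q\cap m$ has at most two elements one of which is $f^{-1}(m)$, so this intersection has at most one point. This exhausts the cases, so $C_{v+1}$ is a partial linear space and therefore a $(v+1)_k$-configuration.

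The step I expect to require the most care is exactly this last case analysis, and in particular the two new ways a double intersection could be created: the point $p$ is placed on every line of $M$, so one must rule out two lines of $M$ meeting at a second point besides $p$, and dually the new line $l$ passes through every point of $Q$, so one must rule out $l$ meeting some $m\in M$ twice. These are precisely the places where the second and third conditions of Definition~\ref{def:augm_bal} are used, mediated by the ``at most two points of $Q$ on a line of $M$'' observation; everything else is routine verification.
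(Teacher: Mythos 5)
Your proof is correct and follows essentially the same route as the paper's: both reduce the claim to a direct verification of the partial linear space property using the second and third conditions of Definition~\ref{def:augm_bal}, with the regularity and counting handled as routine bookkeeping. The paper packages the verification as ``the points of $Q$ are collinear only on $l$, the lines of $M$ meet only in $p$, hence no line meets $l$ twice and no point is collinear with $p$ twice,'' which is just the dual phrasing of your explicit case analysis together with your preliminary ``at most two points of $Q$ on a line of $M$'' observation.
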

\begin{proof}
In $C_v$, two points $q,q'\in Q$ are either not collinear, or collinear on $f(q)$ or $f(q')$. Since the incidences $(q,f(q))$ and $(q',f(q'))$ have been removed in $C_{v+1}$, it is clear that in $C_{v+1}$, the points in $Q$ are collinear only once, on the line $l$. 
Analogously, in $C_v$ two lines $m,m'\in M$ either do not meet, or meet only in the point $f^{-1}(m)$ or in the point $f^{-1}(m')$. Since the incidences $(m,f^{-1}(m))$ and $(m',f^{-1}(m'))$ have been removed in $C_{v+1}$, it is clear that in $C_{v+1}$ the lines in $M$ meet only once, in $p$. 
This also shows that any point in $C_{v+1}$ is collinear with $p$ at most once, and that any line in $C_{v+1}$ meets $l$ at most once. Indeed, the points in $C_{v+1}$ that are collinear with $p$ are the points on the lines in $M$, and since these lines only meet once in $C_{v+1}$, we see that any point in $C_{v+1}$ is collinear with $p$ at most once. Also, the lines in $C_{v+1}$ that meet $l$ are the lines through the points in $Q$, and since these points are collinear only once, in $l$, we see that any line in $C_{v+1}$ meets $l$ at most once.   
Now, these are the only incidences affected by the construction, and consequently, it is proved that $C_{v+1}$ is a partial linear space with $v+1$ points and $v+1$ lines. 
Finally, it is clear that there are $k$ points on each line and $k$ lines through every point, so that $C_{v+1}$ is a $(v+1)_k$\hyp{}configuration. 
\end{proof}
\begin{remark}
\label{remark:1}
The observant reader will find that there are other augmentation constructions which cannot be directly realized by following the steps described above. However, if we allow a final swapping of the incidences involved in the construction, then also these constructions may be described using Proposition~\ref{prop:augm_bal}.  
One example of this is Martinetti's augmentation. 
Consider $Q=\{a,a',a''\}$ and $M=\{\{a,b,c\},\{a',b',c'\},\{a'',b'',c''\}\}$, such that $\{a,b,c\}$ and $\{a',b',c'\}$ are parallel lines and  $a$ and $a'$ are not collinear, and no restrictions other than those in Proposition~\ref{prop:augm_bal} are put on $a''$ and $\{a'',b'',c''\}$,  and define $f(a)=\{a,b,c\}$, $f(a')=\{a',b',c'\}$ and $f(a'')=\{a'',b'',c''\}$. 
Replace the ocurrences of the points in $Q$ on the lines in $M$ with incidences to a new point $p$ so that the resulting lines are $\{p,b,c\}$, $\{p,b',c'\}$, $\{p,b'',c''\}$, and put the points in $Q$ on a new line $\{a,a',a''\}$. Now swap the incidences $(p,\{p,b'',c''\}$ and $(a'',\{a,a',a''\})$ to obtain Martinetti's construction. We see that the original line $\{a'',b'',c''\}$ is then left untouched, in consistency with the fact that Martinetti's construction only involved two lines.  
\end{remark}

Using Proposition~\ref{prop:augm_bal} it is not difficult to prove the following well-known result. 
\begin{corollary}
There is a $v_3$\hyp{}configuration for all admissible parameters. 
\end{corollary}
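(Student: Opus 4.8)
The plan is to argue by induction on $v$, using the augmentation of Proposition~\ref{prop:augm_bal} --- in the guise of Martinetti's construction, which is available through Remark~\ref{remark:1} --- to pass from a $v_3$\hyp{}configuration to a $(v+1)_3$\hyp{}configuration. For balanced $3$\hyp{}configurations the admissibility condition reads $v=b\geq 3(3-1)+1=7$, so it suffices to produce a $v_3$\hyp{}configuration for every integer $v\geq 7$. I would handle the base of the induction by exhibiting explicit small configurations: the Fano plane is a $7_3$\hyp{}configuration (see Figure~\ref{fig:conf}), and for $v=8$ one can take the cyclic configuration on the point set $\mathbb{Z}_8$ with base block $\{0,1,3\}$, whose blocks $\{i,i+1,i+3\}$, $i\in\mathbb{Z}_8$, are readily checked to form an $8_3$\hyp{}configuration: the differences occurring within the base block are $\pm 1,\pm 2,\pm 3$, which are pairwise distinct modulo $8$, so no two points lie on two common lines.

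For the inductive step I would show that every $v_3$\hyp{}configuration $C_v$ with $v\geq 8$ admits a Martinetti augmentation. First, $C_v$ has a pair of parallel lines: a fixed line $\ell=\{a,b,c\}$ meets exactly the $6$ further lines obtained by taking, for each of $a,b,c$, the two other lines through it; these $6$ lines are pairwise distinct, since a line through two points of $\ell$ would be $\ell$ itself, and a line other than $\ell$ meets $\ell$ in exactly one point. Hence $\ell$ is parallel to $v-1-6=v-7\geq 1$ lines. Second, for any pair of parallel lines $\ell=\{a,b,c\}$ and $\ell'=\{a',b',c'\}$ there is a transversal pair of non-collinear points: if, on the contrary, every point of $\ell$ were collinear with every point of $\ell'$, then $a$ would lie on the three pairwise distinct lines joining it to $a'$, $b'$, $c'$ (distinct because a common one would contain two points of $\ell'$ and hence be $\ell'$, which misses $a$) as well as on $\ell$, giving $4>3$ lines through $a$. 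Choosing such a non-collinear transversal pair, the hypotheses of Martinetti's augmentation (equivalently, of Proposition~\ref{prop:augm_bal} via Remark~\ref{remark:1}) are satisfied, and the construction yields a $(v+1)_3$\hyp{}configuration. Running the induction from $v=8$ then gives a $v_3$\hyp{}configuration for every $v\geq 8$, and together with the Fano plane this covers all admissible $v\geq 7$.

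The only genuine obstacle is the base case, not the inductive step: the counting argument breaks down precisely at $v=7$, and indeed the Fano plane --- the unique $7_3$\hyp{}configuration --- is a projective plane, so it has no parallel lines at all and is augmentable by no instance of Proposition~\ref{prop:augm_bal}; consequently an $8_3$\hyp{}configuration must be supplied directly. (One could alternatively observe that the cyclic construction with base block $\{0,1,3\}$ already gives a valid $v_3$\hyp{}configuration for every $v\geq 7$, bypassing the induction entirely, but that route illustrates the augmentation less.) Everything else reduces to the elementary incidence counts above, which is why the statement is ``not difficult''.
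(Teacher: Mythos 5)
Your proof is correct, but it follows a genuinely different route from the paper's. The paper does not use Martinetti's parallel-line augmentation at all: it invokes the general augmentation of Proposition~\ref{prop:augm_bal} directly, observing that \emph{every} $v_3$\hyp{}configuration contains either a triangle or an open path $q_1 l_1 q_2 l_2 q_3 l_3$, and that in either case the three points and three lines (with the natural bijection $f$) satisfy the hypotheses of the proposition. Since the Fano plane contains triangles, the paper's induction starts at $v=7$ with no extra base case. Your argument instead restricts to the Martinetti special case, which forces you to (i) prove the existence of a pair of parallel lines with a non-collinear transversal pair --- your counting arguments for both steps are sound, and the bound $v-7\geq 1$ correctly identifies where the argument needs $v\geq 8$ --- and (ii) supply the $8_3$ cyclic configuration with base block $\{0,1,3\}$ as a second base case. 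What your approach buys is an explicit, self-contained verification that Martinetti's classical construction alone suffices above the Fano plane; what the paper's approach buys is uniformity (one mechanism, one base case) and a template that generalizes to $k=4$ in the next corollary. One side remark in your write-up is wrong, though it does not affect the proof: you assert that the Fano plane ``is augmentable by no instance of Proposition~\ref{prop:augm_bal},'' but it \emph{is} augmentable in that sense --- take any triangle in it, as the paper does; the resulting $(v+1)_3$\hyp{}configuration is exactly the M\"obius--Kantor $8_3$\hyp{}configuration you construct by hand. What is true is only that the Fano plane admits no \emph{Martinetti} augmentation, since it has no parallel lines.
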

\begin{proof}
Any $v_3$\hyp{}configuration is augmentable.
Indeed,  if the $v_3$\hyp{}configuration has a triangle, then its three points $Q=\{q_1,q_2,q_3\}$ and its three lines $M=\{m_1,m_2,m_3\}$ together with the map $f(q_i)=m_i$ satisfy the conditions in Proposition~\ref{prop:augm_bal}. For an illustration of the augmentation in this case, see Figure~\ref{fig:augm_triangle}. If the configuration has no triangles, then consider a
path starting at a point $q_1$ of three lines $l_1$, $l_2$ and $l_3$, intersecting in two points $q_2$ and $q_3$. Then
$Q= {q_1, q_2, q_3}$ and $M=\{m_1,m_2,m_3\}$ satisfy the conditions of Proposition~\ref{prop:augm_bal}. 
Therefore there is a $(v+1)_3$\hyp{}configuration whenever there is a $v_3$\hyp{}configuration. 
The smallest $v_3$\hyp{}configuration is the Fano plane, with $v = 7$, and the result follows. 
\end{proof}
When $k$ is larger than 3, the situation is more complex. Indeed, the projective plane of order 3 is a $13_4$\hyp{}configuration which is not augmentable. However, if a $v_4$\hyp{}configuration has at least deficiency one, then it is augmentable. 
\begin{corollary}
There is a $v_4$\hyp{}configuration for all admissible parameters. 
\end{corollary}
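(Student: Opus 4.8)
Since a $v_4$\hyp{}configuration is balanced, its only parameter is $v$, and admissibility amounts to the inequality $v\geq r(k-1)+1=13$ of the lemma above. The plan is to exhibit a $v_4$\hyp{}configuration for every $v\geq 13$ by induction on $v$, with base cases $v=13$ and $v=14$ and an inductive step that applies a single augmentation.

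For $v=13$ one takes the projective plane of order $3$, i.e. the $2$-$(13,4,1)$ design, which is a $13_4$\hyp{}configuration. This case is special: a $13_4$\hyp{}configuration has deficiency $0$, so all $\binom{13}{2}=78=13\cdot\binom{4}{2}$ pairs of points are collinear, which forces it to be a projective plane of order $3$ and hence unique; as already recalled, it is not augmentable, so $v=14$ cannot be reached from it and must be constructed directly. For $v=14$ I would use the cyclic configuration on the point set $\mathbb{Z}_{14}$ with base line $B=\{0,1,4,6\}$ and lines $B+i$, $i\in\mathbb{Z}_{14}$. Here every line has four points, the $14$ translates of $B$ are distinct (so there are $14$ lines), every point lies on four lines, and two distinct points $p,q$ are collinear precisely when $p-q$ occurs among the twelve differences $b-b'$, $b\neq b'$, of $B$; these turn out to be exactly the twelve elements of $\mathbb{Z}_{14}\setminus\{0,7\}$, each with multiplicity one, so two points lie on at most one common line and the configuration is as required.

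For the inductive step let $v\geq 15$ and assume a $(v-1)_4$\hyp{}configuration $C$ exists. Its point deficiency is $\delta_p=(v-1)-13\geq 1$, so by the statement recalled just before the corollary $C$ is augmentable, and an augmentation in the sense of Proposition~\ref{prop:augm_bal} produces a $v_4$\hyp{}configuration. Each augmentation increases $v$ by one, so the deficiency stays positive and augmentability persists; thus from the $14_4$\hyp{}configuration above one reaches every $v\geq 14$, and together with $v=13$ this gives a $v_4$\hyp{}configuration for all admissible parameters.

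The step carrying the real content is the input fact that a $v_4$\hyp{}configuration of positive deficiency is augmentable. For $k=3$ a triangle or a path of three lines always provides an admissible choice of $Q$ and $M$ in Proposition~\ref{prop:augm_bal}; for $k=4$ one needs four points $Q$ and four lines $M$ whose pairwise collinearities and intersections all satisfy the hypotheses of that proposition, and the naive analogue of the path argument fails, because its non-consecutive pairs of points can be collinear on a line not belonging to $M$. Positive deficiency is what rescues the argument: once $v>13$ the number of collinear point pairs is at most $6v<\binom{v}{2}$, so non-collinear pairs exist, and the four points and lines must be chosen so that every forced collinearity lands on a line of $M$. Making this choice work in general, together with verifying the $14_4$ base case, is where I expect the main obstacle to lie; if the deficiency statement is proved separately in the paper, the corollary is then a short consequence of it and that base case.
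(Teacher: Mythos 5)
Your overall strategy is the same as the paper's: induct on $v$ by augmentation, with the projective plane of order $3$ as the (non-augmentable) $13_4$ base case and a separate $14_4$ configuration from which the induction actually starts. Your explicit $14_4$ example --- the cyclic configuration on $\mathbb{Z}_{14}$ with base line $\{0,1,4,6\}$, whose twelve nonzero differences cover $\mathbb{Z}_{14}\setminus\{0,7\}$ exactly once --- is correct and is in fact more concrete than the paper, which merely asserts that a $14_4$ configuration exists.

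However, you leave the step that carries all the weight unproved, and you say so yourself: you never establish that a $v_4$-configuration of deficiency at least one is augmentable. Observing that non-collinear pairs exist once $v>13$ (since $6v<\binom{v}{2}$) is not enough: Proposition~\ref{prop:augm_bal} requires four points $Q$ and four lines $M$ with a bijection $f$ such that every collinearity among points of $Q$ occurs on a line of $M$ assigned by $f$ to one of the two points involved, and dually every intersection among lines of $M$ occurs at a point of $Q$ assigned to one of the two lines. The paper closes this gap by exhibiting $Q$ and $M$ as either a quadrangle (with $M$ as sides and $Q$ as vertices, so the only pairs needing attention are the two diagonals and the two pairs of opposite sides) or an open path $q_1m_1q_2m_2q_3m_3q_4m_4$, and asserting that deficiency at least one guarantees that one of these can be chosen so that the hypotheses of Proposition~\ref{prop:augm_bal} are met. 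Until you supply an argument of this kind (or an equivalent), your inductive step is a conjecture rather than a proof, so the proposal is incomplete exactly where you predicted it would be.
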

\begin{proof}
Any $v_4$\hyp{}configuration of deficiency at least one is augmentable. 
Indeed, if the deficiency is at least one, then there are points $Q=\{q_1,q_2,q_3,q_4\}$ and lines $M=\{m_1,m_2,m_3,m_4\}$ forming either a quadrangle with $M$ as sides and $Q$ as vertices, or an open path $q_1m_1q_2m_2q_3m_3q_4m_4$ such that the conditions of Proposition~\ref{prop:augm_bal} are satisfied.
Therefore there is a $(v+1)_4$\hyp{}configuration whenever there is a $v_4$\hyp{}configuration. 
The smallest $v_4$\hyp{}configuration is the projective plane of order 3, and there exists also a $14_4$\hyp{}configuration. This latter configuration has deficiency one, and the result follows.  
\end{proof}

\begin{figure}
\begin{center}
\resizebox{0.5\textwidth}{!}{
\begin{pspicture}(8.5,4)
\pscircle[linestyle=dotted](7,1.666){1.21}
\psline[showpoints=true](0,1)(1,1)(3,1)
\psline[showpoints=true](2,3)(3,1)(3.5,0)
\psline[showpoints=true](1,1)(2,3)(2.5,4)
\pscurve[showpoints=true](5,1)(6,1)(7,1.666)
\pscurve[showpoints=true](7,1.666)(8,1)(8.5,0)
\pscurve[showpoints=true](7,1.666)(7,2.867)(7.5,4)
\psdots[fillcolor=black,dotstyle=square,
    dotsize=8pt](7,1.6666)
\end{pspicture}
}
\end{center}
\caption{The new (squared) point and the new (plotted) line of a $(v+1)_3$\hyp{}configuration to the right, added to a triangle in the original $v_3$\hyp{}configuration to the left.}
\label{fig:augm_triangle}
\end{figure}
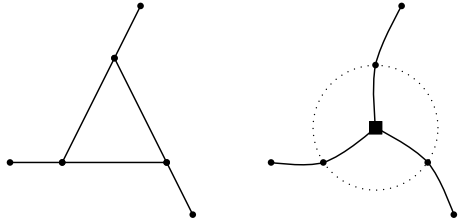

\subsection{Reduction of balanced configurations}
The inverse of the augmentation construction is the reduction. 

\begin{definition}
\label{def:irr_bal}
A reduction of a balanced configuration $(P,L,I)$ is a triple $(p,l,f')$ where
$p$ is a point, $l$ is a line, and $f'$ is an injective function $f' : Q' \rightarrow M'$,
where
\begin{itemize}
\item $Q' = \{q\in P : q \in l \mbox{ and } q \neq p\}$, and
\item $M' = \{m \in L: p \in m \mbox{ and } m \neq l\}$,
\end{itemize}
such that $q$ is not collinear with $r \in f'(q)$, except possibly through $l$ or with $p$. 
A configuration is reducible if it admits a reduction. 
Otherwise it is irreducible.
\end{definition}

\begin{lemma}
If a configuration $(P,L,I)$ admits a reduction as in Definition~\ref{def:irr_bal}, then there is a reduced configuration $(P\setminus\{p\},L\setminus\{l\},\tilde{I})$ obtained from $(P,L,I)$ by replacing the incidences $(p,f'(q))$ and $(q,l)$ for $q\in Q'$ with the incidences $(q,f'(q))$ and removing the point $p$ and the line $l$.
\end{lemma}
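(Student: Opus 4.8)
The plan is to verify directly that $C':=(P\setminus\{p\},\,L\setminus\{l\},\,\tilde I)$ is again a $(v-1)_k$\hyp{}configuration; the argument mirrors the proof of Proposition~\ref{prop:augm_bal} run in reverse, and I would carry it out in three stages: the point and line counts, the degrees, and the partial linear space property.

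First I would dispatch the bookkeeping, which is routine. Since $Q'$ consists of the $k-1$ points of $l$ different from $p$ and $M'$ of the $k-1$ lines through $p$ different from $l$, the injection $f'$ is in fact a bijection $Q'\to M'$. Moreover $q\notin f'(q)$ already holds in $(P,L,I)$ for every $q\in Q'$: otherwise $q$ and $p$ would be two distinct points both lying on the two distinct lines $l$ and $f'(q)$, contradicting that $(P,L,I)$ is a partial linear space. Consequently, for each $q\in Q'$, deleting the incidences $(q,l)$ and $(p,f'(q))$ and inserting the incidence $(q,f'(q))$ leaves the number of lines through $q$ equal to $k$ and, via the bijection $f'$, the number of points on each line of $M'$ equal to $k$. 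As the incidences at $p$ are exactly $(p,l)$ together with the $(p,m)$, $m\in M'$, and the incidences at $l$ are exactly $(p,l)$ together with the $(q,l)$, $q\in Q'$, removing the point $p$ and the line $l$ discards no further incidences and touches no point or line outside $Q'\cup\{p\}$, respectively $M'\cup\{l\}$. Hence $C'$ has $v-1$ points and $v-1$ lines, with $k$ lines through every point and $k$ points on every line.

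The substantive step, and the one I expect to require the most care, is to show that $C'$ is still a partial linear space; this is a single condition, namely the absence of a ``digon'' --- two distinct points lying on two distinct common lines, equivalently two distinct lines sharing two distinct points --- so it suffices to rule out one of the two formulations. Since $(P,L,I)$ is already a partial linear space and the only incidences created by the reduction are the $(q,f'(q))$ with $q\in Q'$, any digon in $C'$ must involve such a new incidence. Tracing it back to $(P,L,I)$, one finds it can only arise if, for some $q\in Q'$, the point $q$ is collinear in $(P,L,I)$ --- on a line other than $l$ --- with some point $r\neq p$ of the line $f'(q)$, or else it collapses to a configuration of incidences already present in $(P,L,I)$; here one uses that in $(P,L,I)$ the line $f'(q)$ meets $l$ exactly in $p$, so that after the reduction $f'(q)$ has merely exchanged the point $p$ for the point $q$ while $l$ has disappeared. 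The first case is excluded precisely by the defining condition of a reduction, that $q$ is not collinear with $r\in f'(q)$ except possibly through $l$ or with $p$; the second contradicts the partial linear space property of $(P,L,I)$. I would present this as a brief case analysis according to how many of the two points (equivalently, the two lines) of the putative digon belong to $Q'$ (to $M'$). Once the digon is excluded, $C'$ satisfies all the axioms of a $(v-1)_k$\hyp{}configuration, which is the assertion of the lemma.
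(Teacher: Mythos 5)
Your argument follows the same route as the paper's proof, which is only two sentences long: check that the incidence replacement preserves the degree of every surviving point and line, and check that the collinearity condition on $f'$ rules out two lines of the reduced structure meeting twice. Your version is a correct and much more detailed elaboration of both steps, and the digon case analysis is sound.

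One caveat worth fixing: you implicitly assume throughout that $p$ lies on $l$. Definition~\ref{def:irr_bal} does not require this --- the discussion following the lemma explicitly identifies Martinetti's reduction as the \emph{special case} in which $p\in l$. When $p\notin l$, the set $Q'$ consists of all $k$ points of $l$ and $M'$ of all $k$ lines through $p$ (not $k-1$ of each; the injectivity of $f'$ still forces it to be a bijection, since $|Q'|=|M'|$ in either case). More importantly, your proof that $q\notin f'(q)$ rests on $q$ and $p$ being two distinct points common to the two distinct lines $l$ and $f'(q)$, which presupposes $p\in l$. The claim remains true in general, but for a different reason: if $q\in f'(q)$, then $q$ is collinear, via the line $f'(q)\neq l$, with every other point of $f'(q)$, and $f'(q)$ contains points distinct from $p$; this already violates the defining condition that $q$ not be collinear with the points of $f'(q)$ except possibly through $l$ or with $p$. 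With that substitution (and the corresponding adjustment of the cardinalities), your argument goes through in full generality and matches the paper's intent.
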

\begin{proof}
Each point is on the same number of lines, and each line goes through the same number of points in $(P\setminus\{p\},L\setminus\{l\},\tilde{I})$ as in $(P,L,I)$. The definition of $f'$ ensures that any two lines in $(P\setminus\{p\},L\setminus\{l\},\tilde{I})$ meet in at most one point. 
\end{proof}

\begin{lemma}
The reduction is the inverse construction of the augmentation. 
\end{lemma}
\begin{proof}
Let $C_v=(P,L,I)$ be a $v_k$\hyp{}configuration with a set $Q=\{q_1,\dots,q_k\}$ of $k$ points and a set $M=\{m_1,\dots,m_k\}$ of $k$ lines satisfying the requirements in Proposition~\ref{prop:augm_bal}. 
Consider the incidences in the augmented $(v+1)_k$\hyp{}configuration $C_{v+1}$ which are not in $C_v$. These incidences are $(p,f(q_i))$ and $(q_i,l)$, for $i\in \{1\dots k\}$. Also consider the incidences that were removed from $C_v$ in the construction of $C_{v+1}$, $(q_i,f(q_i))$,  for $i\in \{1\dots k\}$.  
As described in Remark~\ref{remark:1} and Remark~\ref{remark:2},  some of the incidences involved in the augmentation may be swapped afterwards. This is only relevant if the incidences is of the form  $(p,f(q_i))$ and $(q_i,l)$ (which produces the incidence $(p,l)$, so that the new point and the new line are incident). 
In this case, let $Q'=Q\setminus \{q_i\}$ and $M'=M\setminus \{f(q_i)\}$, otherwise, let $Q'=Q$ and $M=M'$.   
Define the reduction $(p,l,f')$ with $f':Q'\rightarrow M'$ the restriction of $f$ to $Q'$. 
This is a well-defined reduction, since $q\in Q'$ is not collinear with any point $r$ on $f(q)$ in $C_{v+1}$ except possibly with $p$ or through $l$.  
Replace the incidences $(p,f(q))$ and $(q,l)$ for $q\in Q'$ with the incidences $(q,f(q))$ and remove the point $p$ and the line $l$. This reduction produces a $v_k$\hyp{}configuration with the same incidences as $C_v$, hence equal to $C_v$. 
\end{proof}

Observe that according to Definition~\ref{def:irr_bal}, a balanced configuration is irreducible exactly if it is impossible to remove one point and one line and obtain a new configuration, through modifications that only affect the incidences of the removed point and line. 
This is the same definition of irreducibility as the one used by Boben in the case of $v_3$\hyp{}configurations, although he expressed it in terms of the Levi graph. 
Martinetti's irreducibility is the special case in which the removed point $p$ is on the removed line $l=\{p,a,a'\}$, so that $Q'=\{a,a'\}$, $M'=\{\{p,b,c\},\{p,b',c'\}\}$ and $f':Q'\rightarrow M'$ is defined by $f'(a)=\{p,b,c\}$ and $f'(a')=\{p,b',c'\}$. 
The reduction then consists in removing $p$ and $l$ and replacing the appearances of $p$ in $m\in M$ with $f'^{-1}(m)$. 
Note that no incidence swapping was needed when describing Martinetti's reduction in terms of Definition~\ref{def:irr_bal}.  

The somewhat awkward definition of reducibility for balanced configurations can  also be restated as follows.  
\begin{corollary}
\label{cor:irr}
A balanced configuration $v_k$ is reducible if and only if it contains one line $l$ and one point $p$, such that the points $q_i$ on $l$ and the lines $m_i$ through $p$ can be labelled so that $q_i$ is not collinear with any point on $m_i$ except possibly through $l$ or with $p$, for $i\in [1,k]$.  
\end{corollary}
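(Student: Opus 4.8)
The plan is to regard Corollary~\ref{cor:irr} as nothing more than Definition~\ref{def:irr_bal} rewritten so that the labelling ranges over \emph{all} $k$ points of $l$ and \emph{all} $k$ lines through $p$, rather than over the truncated sets $Q'$ and $M'$. Accordingly the argument splits into the two cases $p\notin l$ and $p\in l$, and the only real work is to check that the two bookkeeping conventions match up. Throughout I will use the obvious fact that in a $v_k$-configuration a line carries exactly $k$ points and a point lies on exactly $k$ lines, so that writing $q_1,\dots,q_k$ for the points on $l$ and $m_1,\dots,m_k$ for the lines through $p$ always gives a complete (and automatically bijective) enumeration of each set.

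For the implication ``reducible $\Rightarrow$ such $p,l$ exist'' I would start from a reduction $(p,l,f')$. If $p\notin l$, then $Q'$ is the full point set of $l$ and $M'$ the full pencil of lines through $p$, both of size $k$, so the injection $f'$ is in fact a bijection; taking $q_1,\dots,q_k$ to be the points of $Q'$ and setting $m_i:=f'(q_i)$, the collinearity clause of Definition~\ref{def:irr_bal} is verbatim the clause of the corollary. If instead $p\in l$, then $|Q'|=|M'|=k-1$; I would label the points of $Q'$ as $q_1,\dots,q_{k-1}$, set $m_i:=f'(q_i)$ for $i\le k-1$, and adjoin $q_k:=p$ and $m_k:=l$, so that $\{q_1,\dots,q_k\}$ and $\{m_1,\dots,m_k\}$ are once more the full sets. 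The required clause for $i\le k-1$ holds by Definition~\ref{def:irr_bal}; for $i=k$ it holds trivially, since $q_k=p$ and every point of $m_k=l$ is either $p$ itself or collinear with $p$ through $l$, so in both cases one of the two permitted exceptions applies.

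For the converse, suppose $p$, $l$ and a labelling $q_1,\dots,q_k$, $m_1,\dots,m_k$ with the stated property are given. If $p\notin l$ then $\{q_1,\dots,q_k\}=Q'$ and $\{m_1,\dots,m_k\}=M'$, so $f'(q_i):=m_i$ is a bijection, in particular injective, and the hypothesis is precisely the clause demanded in Definition~\ref{def:irr_bal}; hence $(p,l,f')$ is a reduction. The case $p\in l$ needs one short extra observation: here $p=q_a$ for some index $a$, and I claim the index-$a$ condition forces $m_a=l$. Indeed, suppose $m_a\ne l$. Since $m_a$ and $l$ both pass through $p$, in a partial linear space they meet only in $p$; as $k\ge 2$ the line $m_a$ contains some point $r\ne p$, and then $r\notin l$, so the unique line joining $q_a=p$ to $r$ is $m_a\ne l$ --- that is, $q_a$ is collinear with the point $r\ne p$ only through a line other than $l$, contradicting the property (whose only exceptions are collinearity through $l$ and collinearity with the point $p$). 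Hence $m_a=l$; after relabelling so that $a=k$ (a simultaneous permutation of the $q_i$ and $m_i$ preserves the property), the restriction $q_i\mapsto m_i$ for $1\le i\le k-1$ is an injection $Q'\to M'$ satisfying Definition~\ref{def:irr_bal}, so $(p,l,f')$ is a reduction.

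I expect the only delicate point to be exactly this last observation --- that a valid labelling with $p$ lying on $l$ is automatically forced to pair $p$ with $l$ --- since without it the restriction of the labelling to $Q'$ need not take its values inside $M'$, and the reduction could not be formed. Everything else is a direct, if slightly tedious, translation between the two equivalent formulations of the definition.
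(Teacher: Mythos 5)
Your proof is correct and follows essentially the same route as the paper, whose entire argument is the single line that $f'(q_i)=m_i$ restricted to $q_i\neq p$ yields a reduction $(p,l,f')$. You go further by verifying both directions explicitly and, in particular, by proving the small but genuinely necessary fact the paper glosses over: when $p\in l$, the stated collinearity condition forces the labelling to pair $p$ with $l$, so that the restriction really does land in $M'$.
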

\begin{proof}
Indeed, the function $f(p_i)=l_i$ for $p_i\neq p$ gives a reduction $(p,l,f)$. 
\end{proof}

\begin{remark}
\label{remark:2}
The general form of the augmentation and reduction constructions implies that the resulting configuration may fail to be connected. However, there is choice in the constructions. It is always possible to make the resulting configuration connected. In practice, this can be achieved by swapping two incidences located in different connected components, as described for example in \cite{Riemann}. That is, if $(p,q)$ and $(p',q')$ are two incidences in two different connected components, then replace these incidences with $(p,q')$ and $(p',q)$. By repeating this process as long as the configuration have at least two connected components, eventually a connected configuration is obtained. If the incidences $(p,q)$ and $(p',q')$ are not incidences of the old configuration, but instead both come from the augmentation or the reduction construction, then the incidence swapping gives a configuration that would have resulted from another choice in the construction. Note that Martinetti's augmentation is described in this way in Remark~\ref{remark:1}. 
\end{remark}

\section{Unbalanced configurations}

It is not possible to reduce unbalanced configurations through the removal of one point and one line. 
This is a consequence of the necessary condition for the existence of a configuration $vr=bk$. 
Indeed, $vr=bk$ implies that $(v-1)r/k=vr/k - r/k = b - r/k$ so that $(v-1)r\neq (b-1)k$, whenever $r\neq k$.  
In this context, the reduced parameter set $(d,r,k)$ is useful - the parameter set $(d,r,k)$ is admissible for every integer $d$ satisfying $d\geq \gcd(r,k)(r(k-1)+1)/k$.  
Therefore, a reduction should, given a $(d,r,k)$\hyp{}configuration, produce a $(d-1,r,k)$\hyp{}configuration through the removal of an appropiate number of points and lines, using only modifications that affect the incidences of these removed points and lines.
More precisely, the number of points to remove is $k/\gcd(r,k)$ and the number of lines is $r/\gcd(r,k)$. 

\subsection{Augmentation of unbalanced configurations}
In \cite{Riemann} we described a construction of a  $(d_1+\cdots+d_n+1,r,k)$\hyp{}configuration from $n$ configurations with parameters  $(d_1,r,k),\dots,(d_n,r,k)$. By applying this construction to a single configuration with parameters $(d,r,k)$, one obtains a $(d+1,r,k)$\hyp{}configuration through an augmentation construction. The requirement for this construction to work is that the original configuration contains  a set of $rk/\gcd(r,k)$ points $Q$ and a set of $rk/\gcd(r,k)$ lines $M$ with a special property. 

\begin{definition}
\label{def:augm_unbal}
Let $C_d=(P,L,I)$ be a $(d,r,k)$\hyp{}configuration. 
Assume that there is a multiset $Q$ of $rk/\gcd(r,k)$ (not necessarily distinct) points in $P$ and a multiset $M$ of $rk/\gcd(r,k)$ (not necessarily distinct) lines in $L$ such that 
\begin{itemize}
\item there is a bijection $f:Q\rightarrow M$ such that the image of a point $q$ is a line $f(q)$ through that point,
\item $Q$ can be partitioned into $r/\gcd(r,k)$ parts, each of cardinality $k$, such that two points $q$ and $q'$ in each part, either are not collinear, or are collinear only on the line $f(q)$ or on the line $f(q')$, and
\item $M$ can be partitioned into $k/\gcd(r,k)$ parts, each of cardinality $r$,  such that two lines $m$ and $m'$ in each part either do not meet, or meet only in the point $f^{-1}(m)$ or in the point $f^{-1}(m')$. 
\end{itemize}  
Then there is a $(d+1,r,k)$\hyp{}configuration, constructed from $C_d$ through the following augmentation procedure:

\noindent For all $q$ in $Q$, replace each incidence $(q,f(q))$ with 
\begin{itemize}
\item the incidence $(p,f(q))$, where $p$ is a point from a set $R$ of $k/\gcd(r,k)$ new points, in a way that ensures that each point in $N$ is on exactly $r$ lines, and 
\item the incidence $(q,l)$, where $l$ is a line from a set $N$ of $r/\gcd(r,k)$ new lines, in a way that ensures that each line in $N$ contains exactly $k$ points. 
\end{itemize}
\end{definition}

\begin{proposition}
\label{prop:augm_unbal}
The result of the above construction is a $(d+1,r,k)$\hyp{}configuration. 
\end{proposition}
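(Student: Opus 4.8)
The plan is to mimic the proof of Proposition~\ref{prop:augm_bal} almost verbatim, tracking the extra bookkeeping that the unbalanced parameters force on us. First I would verify that the new object is again a partial linear space, that is, that no two points are collinear more than once and (dually) no two lines meet in more than one point. The only pairs whose collinearity status changes are pairs involving one of the new points in $R$, and pairs both of whose points lie together on one of the lines of $M$ (because an incidence $(q,f(q))$ was removed, possibly making $q$ ``lose'' a collinearity). So I would argue: for $q,q'$ lying in a common part of the partition of $Q$, the hypothesis says in $C_d$ they are collinear only on $f(q)$ or $f(q')$, and since both incidences $(q,f(q))$ and $(q',f(q'))$ are deleted, in the new configuration they are collinear at most on the new line(s) to which they were moved; one must check the assignment of points of $Q$ to the new lines of $N$ is such that two points of a common part go to the same new line (this is exactly what the clause ``in a way that ensures each line in $N$ contains exactly $k$ points'' has to encode, given $|$part$|=k$ and there are $r/\gcd(r,k)$ parts matching the $r/\gcd(r,k)$ new lines). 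Dually for $M$, the parts of size $r$ matching the $k/\gcd(r,k)$ new points of $R$. Then a point $x$ collinear with a new point $p\in R$ must lie on one of the lines of $M$ assigned to $p$, i.e.\ on one of the $r$ lines in the corresponding part; since those lines pairwise meet at most once after the deletions, $x$ is collinear with $p$ at most once, and symmetrically for a line meeting a new line in $N$.

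Next I would check the regularity: every point lies on exactly $r$ lines and every line contains exactly $k$ points. For an old point $q\in Q$, each deleted incidence $(q,f(q))$ is compensated by a new incidence $(q,l)$ with $l\in N$, so its degree is unchanged; old points not in $Q$ are untouched; and the clauses of the construction stipulate that each new point in $R$ receives exactly $r$ incidences and each new line in $N$ receives exactly $k$. One should also double-check the arithmetic is consistent: $|Q|=|M|=rk/\gcd(r,k)=\operatorname{lcm}(r,k)$ incidences are moved, distributed as $k/\gcd(r,k)$ new points of degree $r$ (total $\operatorname{lcm}(r,k)$) and $r/\gcd(r,k)$ new lines of size $k$ (total $\operatorname{lcm}(r,k)$), which matches. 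Finally, the point count goes from $d$ to... here I would note that a $(d,r,k)$-configuration has $v=dk/\gcd(r,k)$ points and $b=dr/\gcd(r,k)$ lines, so adding $k/\gcd(r,k)$ points and $r/\gcd(r,k)$ lines indeed yields a $(d+1,r,k)$-configuration, completing the argument.

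The main obstacle I expect is not any single hard step but rather making the ``in a way that ensures\dots'' clauses do real work: one must confirm that a valid such assignment of $Q$ to $N$ and of $M$ to $R$ always exists and, more importantly, that \emph{any} assignment respecting the part structure (each part of $Q$ to one new line, each part of $M$ to one new point) automatically keeps the partial-linear-space property. The subtle case is a pair $q,q'\in Q$ that lie in \emph{different} parts but happen to be collinear in $C_d$ on some line not equal to $f(q)$ or $f(q')$; such a pair retains that collinearity in the new configuration, and we must be sure they are not also placed on a common new line — which holds precisely because points from different parts go to different new lines. So the crux is to state clearly that the partition-into-parts hypothesis, together with the part-respecting assignment, is exactly what prevents double collinearities, and then the rest is the same routine verification as in Proposition~\ref{prop:augm_bal}. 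I would also remark, as in Remark~\ref{remark:1} and Remark~\ref{remark:2}, that a final swap of newly created incidences covers augmentations not literally of this shape and can be used to guarantee connectedness.
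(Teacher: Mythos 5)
Your proposal is correct and follows the same route the paper intends: the paper gives no detailed argument for Proposition~\ref{prop:augm_unbal}, stating only that it is ``only slightly more involved'' than the proof of Proposition~\ref{prop:augm_bal} and deferring to \cite{Riemann}, and your argument is precisely that proof adapted to the unbalanced parameters. Your observation that the ``in a way that ensures\dots'' clauses must be read as a part-respecting assignment (each part of $Q$ to one line of $N$, each part of $M$ to one point of $R$), and that this is exactly what blocks double collinearities for pairs in different parts, is the ``slightly more involved'' bookkeeping the paper alludes to, so nothing essential is missing.
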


The proof of Proposition~\ref{prop:augm_unbal} is only slightly more involved than the proof of Proposition~\ref{prop:augm_bal}, which is the special case $r=k$. For more details of in the general case, see \cite{Riemann}.
\begin{example}
The finite affine plane of order 3 is a $(3,4,3)$\hyp{}configuration $(P,L,I)$  with 9 points and 12 lines (see Figure~\ref{fig:conf}). Label the points $P$ as $1,\dots,9$ so that the lines $L$ are 
$$\{1,2,3\},\{4,5,6\},\{7,8,9\},\{1,4,7\},\{2,5,8\},\{3,6,9\},$$
$$\{2,4,9\},\{3,5,7\},\{1,6,8\},\{3,4,8\},\{1,5,9\},\{2,6,7\}.$$ 
An augmentation requires 12 points and 12 lines, and we use $M=L$, $Q$ the multiset consisting of $P$ with the three points $1,2,9$ repeated, and the bijection $f:Q\rightarrow M$ defined by 
$$\begin{array}{llllll}
f(1_1)&=\{1,2,3\}&
f(1_2)&=\{1,6,8\}&
f(2_1)&=\{2,4,9\}\\
f(2_2)&=\{2,6,7\}&
f(3)&=\{3,5,7\}&
f(4)&=\{3,4,8\}\\
f(5)&=\{1,5,9\}&
f(6)&=\{4,5,6\}&
f(7)&=\{1,4,7\}\\
f(8)&=\{2,5,8\}&
f(9_1)&=\{3,6,9\}&
f(9_2)&=\{7,8,9\}
\end{array}$$
where $x_1$ and $x_2$ denotes the first and the second occurrence of $x$ in $Q$. 
This gives, with the new points $p_1,p_2,p_3$ and the new lines $l_1,l_2,l_3,l_4$,  a $(4,4,3)$\hyp{}configuration with 12 points $1,\dots,9,p_1,p_2,p_3$ and 16 lines 
$$\begin{array}{llll}
\{p_1,1,4\}&
\{p_1,2,3\}&
\{p_1,5,7\}&
\{p_1,6,8\}\\

\{p_2,2,5\}&
\{p_2,6,7\}&
\{p_2,3,8\}&
\{p_2,4,9\}\\

\{p_3,3,6\}&
\{p_3,4,5\}&
\{p_3,1,9\}&
\{p_3,7,8\}\\

\{1,3,7\}=l_1&
\{2,4,8\}=l_2&
\{5,6,9\}=l_3&
\{1,2,9\}=l_4.
\end{array}
$$
The partition of $Q$  was
$$\{1,3,7\},
\{2,4,8\},
\{5,6,9\},
\{1,2,9\}$$ 
and the partition of $M$ was
$$\{\{1,2,3\},\{1,4,7\},\{3,5,7\},\{1,6,8\}\},$$
$$\{\{2,4,9\},\{2,5,8\},\{3,4,8\},\{2,6,7\}\},$$
$$\{\{3,6,9\},\{4,5,6\},\{1,5,9\},\{7,8,9\}\}.$$
\end{example}

\subsection{Reduction of unbalanced configurations}
The inverse of the augmentation construction is a reduction, which is a generalization of the reduction described in Definition~\ref{def:irr_bal}. 

\begin{definition}
\label{def:irr_unbal}
A reduction of an unbalanced configuration $(P,L,I)$ is a triple $(R,N,f')$ where
$R$ is a set of $k/\gcd(r,k)$  points, $N$ is a set of $r/\gcd(r,k)$ lines, and $f'$ is a bijection between multisets $f' : Q' \rightarrow M'$,
where
\begin{itemize}
\item $Q' = \{q\in P : \exists l\in N: q \in l \mbox{ and } q \not\in R\}$, and
\item $M' = \{m \in L: \exists p\in R : p\in m \mbox{ and } m \not\in N\}$,
\end{itemize}
such that $q$ is not collinear with $r \in f(q)$, except possibly through one of the  lines in  $N$ or with one of the points in $R$. Both $Q'$ and $M'$ are multisets and as such they may contain some element more than once. 
A configuration is reducible if it admits a reduction. 
Otherwise it is irreducible.
\end{definition}

\begin{lemma}
If a configuration $(P,L,I)$ admits a reduction as in Definition~\ref{def:irr_unbal}, then there is a reduced configuration $(P\setminus R,L\setminus N,\tilde{I})$ obtained from $(P,L,I)$ by replacing the incidences $(p,f'(q))$ and $(q,l)$ for $q\in Q'$ with the incidences $(q,f'(q))$ and removing the points in $R$ and the lines in $N$.
\end{lemma}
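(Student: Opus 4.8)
The plan is to verify that $(P\setminus R,\,L\setminus N,\,\tilde I)$ is a combinatorial configuration — constant point degree $r$, constant block size $k$, and the partial linear space property — and that its parameters are those of a $(d-1,r,k)$\hyp{}configuration. This mirrors the proof of the analogous lemma for balanced configurations, which is recovered by taking $r=k$, $R=\{p\}$ and $N=\{l\}$; only the bookkeeping is heavier here, since $R$, $N$ and the domain and codomain of $f'$ are now (multi)sets rather than singletons.

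I would first dispose of the degrees and the parameter count. A $(d,r,k)$\hyp{}configuration has $dk/\gcd(r,k)$ points and $dr/\gcd(r,k)$ lines, so deleting the $k/\gcd(r,k)$ points of $R$ and the $r/\gcd(r,k)$ lines of $N$ leaves exactly $(d-1)k/\gcd(r,k)$ points and $(d-1)r/\gcd(r,k)$ lines, the counts required by the parameter set $(d-1,r,k)$. For a surviving point $q\notin R$, the multiplicity of $q$ in $Q'$ equals the number of lines of $N$ through $q$, by the definition of $Q'$; forming $\tilde I$ deletes exactly those incidences and, since $f'$ is a bijection of multisets, restores exactly one incidence $(q,f'(q))$ per occurrence of $q$ in $Q'$. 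Because $f'(q)\in M'\subseteq L\setminus N$, and because the collinearity condition of Definition~\ref{def:irr_unbal} prevents $q$ from already being incident to $f'(q)$ in $I$ (outside the degenerate cases discussed below), the number of lines through $q$ is unchanged and equal to $r$. The dual argument, using the multiset $M'$, shows that every surviving line still carries exactly $k$ points.

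The content of the lemma is the partial linear space property, which I would prove by contradiction: suppose two distinct surviving lines $m_1\neq m_2$ share two distinct surviving points $x\neq y$ in $\tilde I$, and classify each of the incidences $(x,m_i)$, $(y,m_i)$ as \emph{old} (present already in $I$) or \emph{new} (created by the reduction, i.e.\ $f'(x)=m_i$, respectively $f'(y)=m_i$). If all four are old, then $x$ and $y$ lie on both $m_1$ and $m_2$ in $(P,L,I)$, contradicting that $(P,L,I)$ is a partial linear space. Otherwise some incidence, say $(x,m_1)$, is new, so that $m_1=f'(x)$; then the collinearity condition forbids $x$ from being collinear in $(P,L,I)$, via any line outside $N$, with any point of $m_1$ that is not in $R$. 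Tracing this through the old incidences that could occur among $(x,m_2)$, $(y,m_1)$ and $(y,m_2)$ eliminates the remaining mixed cases, because the only permitted escape routes — a witnessing line lying in $N$, or a witnessing point lying in $R$ — are precisely the objects deleted when forming $\tilde I$. The dual statement, that two surviving points lie on at most one common surviving line, follows symmetrically.

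The step I expect to be the main obstacle is the fully "new" subcase, where two incidences on a surviving line (or, dually, through a surviving point) are both freshly created by the reduction; there the bare condition of Definition~\ref{def:irr_unbal} gives no direct leverage, and likewise for the degenerate situations mentioned above in which an added incidence $(q,f'(q))$ is already present in $I$. To close these I would appeal to the extra structure that a genuine reduction inherits as the inverse of the augmentation of Definition~\ref{def:augm_unbal}, namely the partition of the modified points into $r/\gcd(r,k)$ blocks of size $k$ and of the modified lines into $k/\gcd(r,k)$ blocks of size $r$, and — exactly as the paper already does for Martinetti's construction — allow a final incidence swap (Remark~\ref{remark:2}) normalizing the reduction so that no added incidence is degenerate, and, if one wishes, so that the output is connected. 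Once these cases are settled, all three configuration axioms hold and $(P\setminus R,\,L\setminus N,\,\tilde I)$ is the asserted $(d-1,r,k)$\hyp{}configuration.
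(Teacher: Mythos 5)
Your overall route is the same as the paper's: verify that point degrees and line sizes are preserved, and then verify the partial linear space property. The difference is one of thoroughness rather than of method --- the paper's own proof consists of two sentences, asserting that degrees are unchanged and that ``the definition of $f'$ ensures that any two lines meet in at most one point,'' with no case analysis at all. Your classification of the four incidences $(x,m_1),(x,m_2),(y,m_1),(y,m_2)$ into old and new is exactly the argument the paper leaves implicit, and the mixed cases are indeed killed by the collinearity clause of Definition~\ref{def:irr_unbal} in the way you describe.

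More importantly, the obstacle you single out is real and is \emph{not} addressed by the paper's proof. Since $M'$ is a multiset, a line $m$ containing two points of $R$ occurs twice in $M'$, and the bijection $f'$ may then assign two distinct points $x,y\in Q'$ to the two occurrences of $m$; both incidences $(x,m)$ and $(y,m)$ are new, and the condition of Definition~\ref{def:irr_unbal} only constrains collinearity between a point $q$ and the points \emph{already on} $f'(q)$, so nothing in the definition prevents $x$ and $y$ from being collinear on some line outside $N$, which would break the partial linear space property after the replacement. The dual situation (two new incidences through one surviving point) and the degenerate case $q\in f'(q)$ raise the same issue. Your proposed repair --- importing the partition structure of Definition~\ref{def:augm_unbal} and a normalizing incidence swap in the spirit of Remark~\ref{remark:2} --- is plausible, but note that it amounts to strengthening the hypotheses of the lemma beyond what Definition~\ref{def:irr_unbal} states, so as written your proof of the lemma \emph{as stated} remains incomplete; the honest conclusion is that the definition needs an additional clause (e.g.\ that points of $Q'$ assigned to the same line of $M'$, and lines of $M'$ assigned to the same point of $Q'$, satisfy the analogous non-collinearity and non-meeting conditions), which is exactly the content of the second and third bullet points of Definition~\ref{def:augm_unbal}. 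The paper's proof silently assumes this; you made the assumption visible, which is the right thing to do.
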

\begin{proof}
Each point is on the same number of lines, and each line goes through the same number of points in $(P\setminus\{p\},L\setminus\{l\},\tilde{I})$ as in $(P,L,I)$. The definition of $f'$ ensures that any two lines in $(P\setminus R,L\setminus N,\tilde{I})$ meet in at most one point. 
\end{proof}

\begin{lemma}
The reduction is the inverse construction of the augmentation. 
\end{lemma}
\begin{proof}
Let $C_d$ be a $(d,r,k)$\hyp{}configuration with a set $Q=\{q_1,\dots,q_{rk/\gcd(r,k)}\}$ of points, a set $M=\{m_1,\dots,m_{rk/\gcd(r,k)}\}$ of lines and a bijection $f:Q\rightarrow M$, satisfying the requirements of Definition~\ref{def:augm_unbal}. 
Consider the incidences in the augmented $(d+1,r,k)$\hyp{}configuration $C_{d+1}$ which are not in $C_d$. 
These incidences are $(p,f(q_i))$ and $(q_i,l)$, for $i\in \{1\dots rk/\gcd(r,k)\}$, for some $p\in R$ and some $l\in N$ (if no swapping is allowed).  
Also consider the incidences that were removed from $C_d$ in the construction of $C_{d+1}$, $(q_i,f(q_i))$,  for $i\in \{1\dots rk/\gcd(r,k)\}$.  
If we allow, for some set of indices $I$, the incidences $(q_i,l)$ and $(p,f(q_i))$, $i\in I$, to be swapped afterwards, making the lines in $N$ and the points in $R$ incident,  
then let $Q'=Q\setminus \{q_i: i\in I\}$ and $M'=M\setminus \{f(q_i)\in I\}$, otherwise, let $Q'=Q$ and $M=M'$.   
Define the reduction $(R,N,f')$ with $f':Q'\rightarrow M'$ the restriction of $f$ to $Q'$. 
This is a well-defined reduction, since $q\in Q'$ is not collinear with any point $r$ on $f(q)$ in $C_{d+1}$ except possibly with some $p\in R$ or through some $l\in N$.   
For all $p\in R$ and all $l\in N$, replace the incidences $(p,f(q))$ and $(q,l)$ for $q\in Q'$ with the incidences $(q,f(q))$ and remove the point $p$ and the line $l$. 
This reduction produces a $(d,r,k)$\hyp{}configuration with the same incidences as $C_d$, hence equal to $C_d$. 
\end{proof}

\begin{remark}
Remark~\ref{remark:2}, regarding the connectedness of the result of the augmentation and the reduction constructions, is valid also for unbalanced configurations.
\end{remark}

\section{Irreducibility and reducibility in configurations}
We would like to characterize the set of irreducible configurations. The results presented next provide some progress in this direction. 

\subsection{Irreducibility in small configurations}
The smallest $(r,k)$\hyp{}configurations  are the linear spaces, whenever they exist. Examples of linear spaces are projective and affine planes.  
The inexistence of smaller $(r,k)$\hyp{}configurations clearly implies that the linear spaces are irreducible. 
However, as the next results states, there are also  other $(r,k)$\hyp{}configurations that are necessarily irreducible because they are small. 
\begin{lemma}
\label{thm:1}
Any $(r,k)$\hyp{}configuration with point deficiency $\delta_p<k-(r+k)/\gcd(r,k)$ or line deficiency $\delta_l<r-(r+k)/\gcd(r,k)$ is irreducible. 
\end{lemma}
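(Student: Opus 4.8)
The plan is to show that if a configuration admits a reduction, then its deficiency cannot be too small; contrapositively, a configuration with deficiency below the stated bound must be irreducible. I will argue the point-deficiency case (the line case is dual, obtained by swapping the roles of points and lines and of $r$ and $k$). Suppose $(P,L,I)$ is an $(r,k)$-configuration that admits a reduction $(R,N,f')$ in the sense of Definition~\ref{def:irr_unbal}, where $|R|=k/\gcd(r,k)$ and $|N|=r/\gcd(r,k)$ (in the balanced case this is just $|R|=|N|=1$, Definition~\ref{def:irr_bal}). The key object to count is the point $p$ in $R$ together with all points collinear with $p$: I will bound the number of such points from below using the defining property of the reduction, and from above using the deficiency, and derive the inequality $\delta_p \geq k-(r+k)/\gcd(r,k)$.

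First I would fix a point $p\in R$ and count the points collinear with $p$. The point $p$ lies on $r$ lines; all but those in $N$ — that is, at least $r-r/\gcd(r,k)$ of them — belong to $M'$, and each such line $m$ carries a point $q=f'^{-1}(m)\in Q'$, so each of $p$'s lines (except those in $N$) contributes its $k-1$ points other than $p$. The reduction property says precisely that $q=f'^{-1}(m)$ is not collinear with any point of $m$ except possibly through a line of $N$ or with a point of $R$. The idea is that the lines of $M'$ through $p$ must therefore be "almost disjoint" away from $p$: two distinct such lines $m=f'(q)$ and $m'$ can only share, besides $p$, points that lie on lines of $N$ or belong to $R$. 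I would make this precise by showing that the points of $P\setminus R$ lying on the $M'$-lines through $p$, but on no line of $N$, are all distinct across these lines — this is where the injectivity of $f'$ and the collinearity restriction combine. Each line of $M'$ through $p$ contains at least $k-1-(\text{correction terms from }N\text{ and }R)$ such "private" points, and summing over the $\geq r-r/\gcd(r,k)$ lines of $M'$ through $p$ gives a lower bound on $|P|$ of roughly $1+(r-r/\gcd(r,k))(k-1-\dots)$. Comparing with $v\le$ the actual count and with $v - 1 = \delta_p + r(k-1)$ should yield the bound after collecting the $\gcd$ terms; the arithmetic is routine once the combinatorial disjointness is established.

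The main obstacle I anticipate is handling the "correction terms" cleanly — precisely accounting for how many points on a line of $M'$ through $p$ can be shared with another such line via a line of $N$ or via membership in $R$. The sets $Q'$ and $M'$ are multisets, so a line may be repeated in $M'$, and one must be careful that repeated elements do not spuriously inflate or deflate the count; I would address this by working with the underlying incidences rather than with set cardinalities, i.e. counting incident pairs $(q,m)$ and then passing to points. A secondary subtlety is that $p$ ranges over a set $R$ of more than one point in the unbalanced case, but since we only need one point to run the counting argument, fixing a single $p\in R$ suffices and the presence of the other points of $R$ only enters as one of the allowed exceptions. Once the disjointness of the "private" point sets is nailed down, the inequality falls out by the pigeonhole-style comparison with $\delta_p = v - r(k-1) - 1$, and the line-deficiency statement follows by the evident point/line duality.
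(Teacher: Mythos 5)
Your counting is oriented the wrong way, and as set up it cannot produce the stated bound. You fix $p\in R$ and lower-bound $v$ by counting the points on the lines of $M'$ through $p$. But in any partial linear space two distinct lines through $p$ already meet only in $p$, so the ``private point'' disjointness you plan to establish is automatic and uses neither the injectivity of $f'$ nor the reduction property; the count you would obtain is $v\geq 1+(r-r/\gcd(r,k))(k-1-\cdots)$, which is \emph{weaker} than the trivial bound $v\geq 1+r(k-1)$ valid for every configuration. Substituting $v-1=\delta_p+r(k-1)$ into such an inequality yields $\delta_p\geq(\text{a negative quantity})$, i.e.\ no information about the deficiency at all. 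Indeed, the quantity ``number of points collinear with $p$'' equals $r(k-1)$ in every $(r,k)$-configuration, independently of whether a reduction exists, so bounding it ``from below using the reduction and from above using the deficiency'' cannot distinguish reducible from irreducible configurations.

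The paper's argument counts the complementary set and attaches it to a point of $Q'$, not of $R$. Fix $q\in Q'$ and let $m=f'(q)$. The reduction must add the incidence $(q,m)$, so each of the $k$ points of $m$ must be non-collinear with $q$, except for at most $k/\gcd(r,k)$ points lying in $R$ and at most $r/\gcd(r,k)$ points reached from $q$ along a line of $N$ (each line of $N$ through $q$ meets $m$ in at most one point). Hence $m$ carries at least $k-(r+k)/\gcd(r,k)$ points genuinely non-collinear with $q$. On the other hand, since the $r$ lines through $q$ pairwise intersect only in $q$, exactly $r(k-1)$ points are collinear with $q$, so exactly $v-1-r(k-1)=\delta_p$ points are non-collinear with it. A reduction therefore forces $\delta_p\geq k-(r+k)/\gcd(r,k)$, and the line statement is the dual count for a line $m\in M'$ against the lines through $f'^{-1}(m)$. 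This upper bound on the \emph{supply} of non-collinear points, measured against the \emph{demand} made by the single line $f'(q)$, is the step missing from your proposal.
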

\begin{proof}
In a reducible configuration there are points $Q'$ and lines $M'$ and a bijection $f':Q' \rightarrow M'$ such that $q\in Q'$ is not collinear with any of the $k$ points on $f'(q)\in M'$, except possibly with some of the $k/\gcd(r,k)$ removed points $R$, or through some of the $r/\gcd(r,k)$ removed lines $N$. 
This condition is equivalent to requiring that $f'(q)\in M'$ does not meet any of the $r$ lines through $q$, 
except possibly on some of the $k/\gcd(r,k)$ removed points $R$, 
or through some of the $r/\gcd(r,k)$ removed lines $N$. 
But, if the point deficiency $v-[r(k-1)+1]$ is smaller than $k-(r+k)/\gcd(r,k)$, 
then for any point $q$ there is no line $m$ such that $q$ is only collinear with the points on $m$ on either some points in $R$ or through some lines in $N$, so the configuration must be irreducible. 
Analogously, if the line deficiency $b-[k(r-1)+1]$ is smaller than $r-(r+k)/\gcd(r,k)$, then for any line $m$ there is no point $q=f'^{-1}(m)$ such that $m$ does not meet any of the points through $q$, except if it is a line in $N$ or if the intersection point is a point in $R$, and again, the configuration must be irreducible. 
\end{proof}
This bound is sharp in the meaning that there are reducible $(r,k)$\hyp{}configurations of deficiency $\delta_p=k-(r+k)/\gcd(r,k)$ and $\delta_l=r-(r+k)/\gcd(r,k)$. For example, the M\"obius-Kantor $8_3$\hyp{}configuration, with deficiency $\delta_p=\delta_l=3-(3+3)/3=1$, is reducible.   
Indeed, for $v_3$\hyp{}configurations, Lemma~\ref{thm:1} is only relevant for deficiency 0.    
From~\cite{boben_arxiv} we know that the irreducible $v_3$\hyp{}configurations are the Fano plane (of deficiency 0) and the Pappus' configuration. However, Pappus' configuration has deficiency 3, so Lemma~\ref{thm:1} does not apply. 
But when $k$ is larger than 3, then Lemma~\ref{thm:1} can imply the irreducibility of more than one $(r,k)$\hyp{}configuration. 
Indeed, for $r=k=4$, the two configurations $13_4$ and $14_4$ must be irreducible, for $r=k=5$, the configurations $21_5$ and $23_5$ have deficiency 0 and 2, so they are both irreducible. There is no configuration with parameters $22_5$. 
For $r=k=6$, the configurations $31_6$ and $34_6$ are both irreducible, since they have deficiencies 0 and 3, hence smaller than 4. There are no configurations $32_6$ 
and $33_6$. 
For $k=7$, the only configuration with deficiency strictly smaller than 5 that is known to exist is of deficiency 2, with parameters $45_7$. There are no configurations $43_7$ and $44_7$. If the configurations $46_7$ and $47_7$ exist, then they are irreducible. For a reference on the existence and non-existence of small balanced configurations, see for example~\cite{Gropp}.

\subsection{Irreducible configurations and transversality - Pappus' configuration}
The irreducibility of the Fano plane can be explained by Lemma~\ref{thm:1}. 
The reason why Pappus' configuration is irreducible is different, and based on transversality. 

A transversal design $TD_{\lambda}(k, n)$ is a $k$\hyp{}uniform incidence geometry on
$kn$ points, allowing a partition of $k$ groups of $n$ elements, such that
any group and any block contain exactly one common point, and
every pair of points from distinct groups is contained in exactly $\lambda$ blocks. 
A transversal design $TD_{\lambda}(k,n)$ is resolvable if the set of blocks can be partitioned into parallel
classes of blocks, such that each class forms a partition of the point set. 
 
When $\lambda=1$, then the design is a $(kn_n,n^2_k)$\hyp{}configuration, and we call the blocks lines.  
There is a $TD_1(k, n)$ whenever there is an affine plane of order $n$ and
$k \leq  n$.  
Indeed, just take the points on $k$ lines in a parallel class of the affine plane and restrict the rest of
the lines to these points. 
Pappus' configuration can be constructed in this way from the affine plane of order 3, by restricting to the points on all the 3 lines in one of its 4 classes of parallel lines. Since the points on these 3 lines are all points in the affine plane, in this case the construction consists of eliminating one parallel class of lines from the affine plane. 
By instead  restricting to the points on only two lines in one of the parallel classes, a transversal design $TD_1(2,3)$ is obtained, which is a $(6_3,9_2)$\hyp{}configuration, that is, the bipartite complete graph on 6 vertices. 
\begin{lemma}
A resolvable transversal design $TD_1(k, n)$ is irreducible if $$k\geq (k+r)/\gcd(r,k)+1.$$
\end{lemma}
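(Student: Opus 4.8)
The plan is to exploit the collinearity structure of a transversal design: in a $TD_1(k,n)$ two points are collinear if and only if they lie in distinct groups, and then they lie on a unique common line. Since the design is a $(kn_n,n^2_k)$\hyp{}configuration we have $r=n$, so a reduction (Definitions~\ref{def:irr_bal} and~\ref{def:irr_unbal}) removes a set $R$ of $|R|=k/\gcd(r,k)$ points and a set $N$ of $|N|=r/\gcd(r,k)$ lines. I would first record that the hypothesis $k\ge(k+r)/\gcd(r,k)+1$ forces $\gcd(r,k)\ge 2$ (otherwise it would read $k\ge k+r+1$), and hence also $k\ge 3$ and $|R|=k/\gcd(r,k)\le k/2$.

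Next I would assume for contradiction that a reduction $(R,N,f')$ exists, fix any $q\in Q'$, and put $m=f'(q)$. If $q\in m$, then the other $k-1$ points of $m$ are all collinear with $q$ along $m$ itself; as $m\in M'$ is not a line of $N$, each of them would have to lie in $R$, which is impossible since $k-1>k/2\ge|R|$. So $q\notin m$, and then $m$ meets the group of $q$ in exactly one point, so $q$ is collinear with exactly the remaining $k-1$ points of $m$; by the definition of a reduction each of these must lie in $R$ or be joined to $q$ by a line of $N$. Distinct points of $m$ are joined to $q$ by distinct lines (two lines meet in at most one point), so at most $|N|$ of them are excused by $N$ and at most $|R|$ by $R$, giving $k-1\le|R|+|N|=(k+r)/\gcd(r,k)$. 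The hypothesis forces this to be an equality, and tracing the chain backwards forces, for every $q\in Q'$, every line of $N$ to pass through $q$.

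The step I expect to be the main obstacle is ruling out this boundary case. If $|N|\ge 2$, two distinct lines of $N$ pass through every point of $Q'$, so $|Q'|\le 1$; but a single line of $N$ already contributes at least $k-|R|\ge 2$ points to $Q'$, a contradiction. Hence $|N|=1$, say $N=\{\ell\}$, so $Q'\subseteq\ell$ and $\gcd(r,k)=n$ divides $k$. Re-running the count for $q\in Q'\subseteq\ell$, the only point of $f'(q)$ that $N$ can excuse is $f'(q)\cap\ell$, so the remaining $k-2$ collinear points of $f'(q)$ must lie in $R$, whence $k/n=|R|\ge k-2$; together with $n\mid k$ and $k\ge 3$ this leaves only $(k,n)\in\{(3,3),(4,2)\}$. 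The case $(4,2)$ is vacuous, since no $TD_1(4,2)$ exists, and for $(k,n)=(3,3)$ the design is Pappus' configuration, which I would finish off directly via Corollary~\ref{cor:irr}: if the removed point $p$ is not on the removed line $\ell$, the three points of $\ell$ occupy all three groups while each must be collinear with $p$, so $p$ would lie in no group --- absurd; and if $p\in\ell$, then for each $q\in\ell\setminus\{p\}$ the line $f'(q)$ is forced to contain both $p$ and the third point of $\ell$, hence to equal $\ell$, contradicting $f'(q)\in M'$.
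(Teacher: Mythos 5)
Your proof is correct, and it is worth comparing carefully with the paper's. Both arguments turn on the same count: for $q\in Q'$ with $m=f'(q)$, the $k-1$ points of $m$ outside the group of $q$ are all collinear with $q$, each such collinearity must be excused by one of the $|R|=k/\gcd(r,k)$ removed points or one of the $|N|=r/\gcd(r,k)$ removed lines, and distinct points of $m$ need distinct excuses, so $k-1\le |R|+|N|=(k+r)/\gcd(r,k)$. The paper stops here and asserts irreducibility under the stated hypothesis; but note that the hypothesis $k\ge (k+r)/\gcd(r,k)+1$ only yields $k-1\ge |R|+|N|$, so the counting alone rules out a reduction only when the inequality is strict, i.e.\ when $k\ge (k+r)/\gcd(r,k)+2$. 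The boundary case $k=(k+r)/\gcd(r,k)+1$ --- which is precisely the case of Pappus' configuration, $TD_1(3,3)$, and the reason this lemma is interesting --- is not settled by the count. This is exactly the case your second half addresses: you extract the structural consequences of equality (every line of $N$ through every $q\in Q'$, hence $|N|=1$ and $n\mid k$, hence $k/n\ge k-2$), reduce to $(k,n)\in\{(3,3),(4,2)\}$, dispose of $(4,2)$ by nonexistence of the design, and kill $(3,3)$ by the group/parallel-class argument (a removed point off $\ell$ must avoid all three groups; a removed point on $\ell$ forces $f'(q)=\ell$). So your proof is not merely a rewrite: it supplies the equality-case analysis that the paper's counting argument needs in order to actually cover the stated bound, at the cost of an explicit (but short) case discussion. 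One cosmetic remark: like the paper's own argument, yours never uses resolvability, so the lemma holds for arbitrary $TD_1(k,n)$.
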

\begin{proof}

Let $T=TD_1(k,n)$ be a resolvable transversal design.
Let $p$ be a point in $T$ and $m_1,\dots,m_r$ the lines through $p$.
Then $m_1,\dots , m_r$ are in different parallel classes.
Let $l$ be a line in $T$ and $q$ a point on $l$.
Then $q$ is collinear with all points on the lines $m_1,\dots ,m_r$ except one on
each line, which belong to the same group as $q$ ($q$ is not collinear with itself). 
At most $(r+k)/\gcd(r,k)$ of these incidences will not obstruct a reduction, since a reduction removes $k/\gcd(r,k)$ points and $r/\gcd(r,k)$ lines. 
Therefore,  since the point $p$ and the line $l$ were chosen arbitrarily, if $k\geq (k+r)/\gcd(r,k)+1$, then it is not possible to find a reduction of $T$ that removes $p$ and $l$ (and possibly other points and lines). 
More precisely, there is no $f'$ mapping $q$ to $m_i$, for some $i$, such that $q$ is not collinear with any point on $m_i$, except possibly with the $k/\gcd(r,k)$ removed points or through the $r/\gcd(r,k)$ removed lines.

\end{proof}
Note that this proves that Pappus' configuration, which is a $TD_1(3,3)$, is irreducible, but it does not prove the same fact for the $TD_1(2,3)$. Indeed, the latter is reducible, as is any graph with deficiency high enough.  
Observe that the deficiency of a transversal design $TD_1(k,n)$ satisfies $d=n-1\geq k-1$, so that these irreducible configurations are not covered by Lemma~\ref{thm:1}.

\subsection{Reducibility in large configurations}
When the deficiency is large enough, then reducibility can be ensured.
\begin{lemma}
A $(v_r,b_k)$\hyp{}configuration is reducible if
$b\geq 1+r+r(k-1)(r-1)+r(r-1)^2(k-1)^2$
\end{lemma}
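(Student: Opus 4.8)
The plan is to show that when $b$ is this large, one can always find a reduction in the sense of Definition~\ref{def:irr_unbal} (or Definition~\ref{def:irr_bal} in the balanced case). A reduction requires a set $R$ of $k/\gcd(r,k)$ points and a set $N$ of $r/\gcd(r,k)$ lines together with a bijection $f'$ between the surviving points on the lines of $N$ and the surviving lines through the points of $R$, such that each such point $q$ is ``free'' of its image $f'(q)$, meaning $q$ is not collinear with any point of $f'(q)$ except possibly through a line of $N$ or at a point of $R$. The strategy is to pick $N$ and $R$ greedily and then argue that the required bijection exists by a counting/Hall-type argument, exploiting the fact that a very large $b$ forces, for every line $m$ through a point $p\in R$, the existence of many other lines that are ``far'' from $m$.

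First I would reduce to the simplest instance of the definition: take $|N|$ and $|R|$ as small as the parameters allow and, as a first attempt, try to take $N$ and $R$ overlapping so that $Q'$ and $M'$ are small (this is the analogue of Martinetti's special case where $p\in l$). Fix a line $l$ and a point $p$ on it. The set $M'$ of candidate target lines consists of the $r-1$ lines through $p$ other than $l$ (I am describing the balanced-looking case; the general case adds a bounded multiplicative factor that is already absorbed into the stated bound). For a given surviving point $q$ on $l$, the lines that $q$ is \emph{forbidden} to be matched to are exactly the lines meeting some line through $q$: there are $r$ lines through $q$, each meeting at most $k-1$ further lines at each of its $k-1$ other points, so at most $1+r(k-1)(r-1)$ lines are ``close'' to $q$. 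The key point is then that the number of lines of the configuration is so large that even after excluding, for \emph{all} the $k-1$ surviving points of $l$ simultaneously, these forbidden sets, many admissible targets remain — and a counting argument (or Hall's theorem on the bipartite ``compatibility'' graph between surviving points of $l$ and lines through $p$) produces the matching $f'$. The nested term $r(r-1)^2(k-1)^2$ in the bound is exactly what is needed to guarantee that we can \emph{also} choose $l$ and $p$ so that the $r-1$ lines through $p$ are themselves not all forbidden for the points of $l$; that is, a second layer of the same estimate applied to choosing the pair $(p,l)$ in the first place.

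The main obstacle I expect is bookkeeping the ``second layer'' cleanly: one must choose $p$ and $l$ so that enough lines through $p$ survive as admissible images for enough points of $l$, and then within that choice run Hall's condition. Concretely, I would (i) bound the number of lines excluded by a \emph{single} compatibility constraint as above, getting roughly $r(k-1)(r-1)$; (ii) bound the number of \emph{pairs} $(p,l)$ that are bad because some point of $l$ has too few admissible images through $p$, which costs another factor of order $r(r-1)(k-1)$ times the previous bound, yielding the $r(r-1)^2(k-1)^2$ term; (iii) conclude that if $b$ exceeds $1+r+r(k-1)(r-1)+r(r-1)^2(k-1)^2$ then a good pair $(p,l)$ exists and Hall's condition holds for it, so $f'$ exists and the configuration is reducible. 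The verification that Hall's condition is genuinely implied — i.e. that every subset of surviving points of $l$ has enough common admissible targets, not merely that each single point does — is the delicate point, and I would handle it by noting that each point individually forbids at most $r(k-1)(r-1)+1$ of the $b$ lines, so the union over all $\le k-1$ points still leaves $\ge b - (k-1)(r(k-1)(r-1)+1)$ admissible lines available to all of them jointly, which under the stated bound exceeds $r-1=|M'|$ and hence trivially satisfies Hall.
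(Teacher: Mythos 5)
Your strategy is genuinely different from the paper's, and it contains a gap at the decisive step. The paper does not use a matching argument at all: it fixes a point $p$, observes that at most $r+r(k-1)(r-1)+r(r-1)^2(k-1)^2$ lines contain a point at distance one or two from $p$ in the collinearity graph (the three terms are the lines through $p$, the lines through its neighbours, and the lines through the points at distance two --- not two nested compatibility estimates as in your reading), and concludes from the hypothesis on $b$ that some line $l$ lies entirely at distance at least three from $p$. For such a pair, every point of $l$ is non-collinear with every point of every line through $p$, so \emph{any} bijection $f'$ from the points of $l$ to the lines through $p$ is a valid reduction; Hall's theorem is never needed.

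The gap in your version is the final Hall verification. You conclude that the $k-1$ surviving points of $l$ jointly leave at least $b-(k-1)\bigl(r(k-1)(r-1)+1\bigr)$ admissible lines, and that this exceeds $r-1=|M'|$, ``hence Hall is trivially satisfied.'' But the images of $f'$ must be drawn from $M'$, i.e.\ from the $r-1$ lines through $p$, not from the whole line set: knowing that many lines \emph{somewhere} in the configuration are far from all points of $l$ says nothing about whether any of the $r-1$ lines through $p$ is among them. With your choice $p\in l$, every point of $l$ is collinear with $p$, and whether a line $m\ni p$ is admissible for a point $q\in l$ is a purely local question about triangles and quadrangles through the flag $(p,l)$, which is not controlled by making $b$ large. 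Your ``second layer'' --- choosing a pair $(p,l)$ for which enough lines of $M'$ survive --- is exactly where this would have to be repaired, but it is only asserted to cost the term $r(r-1)^2(k-1)^2$, not actually carried out. The natural repair is to abandon $p\in l$ and instead take $l$ disjoint from, and at distance at least three from, $p$, at which point the matching problem trivialises; that is precisely the paper's proof.
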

\begin{proof}
Given a point $p$ there are at most $r+r(k-1)(r-1)+r(r-1)^2(k-1)^2$ lines containing at least one point at distance one or two from $p$. This bound is attained if the configuration is triangle-, quadrangle-, and pentagonal-free. 
If the configuration contains an additional line $l$, then $l$ contains only points at distance at least three from $p$. 
In other words, the points on $l$ are not collinear with any point that is collinear with $p$. 
This implies that the configuration is reducible. 
\end{proof}
In a balanced $v_k$\hyp{}configuration, the number of lines $b$ equals the number of points $v$. 
Therefore, in this case the bound also takes the form $v \geq  1 + k  +k (k - 1)^2 + k(k-1)^4$.
This is not a sharp bound, indeed, for $v_3$\hyp{}configuration it can only ensure reducibility for $v\geq 64$, but we know that all $v_3$\hyp{}configurations are reducible for $v\geq 10$. 

The irreducibility of $v_k$\hyp{}configurations with $v$ between these lower and upper bounds, is still in general an open question.  It is of course possible to test a given configuration, by hand or with the help of a computer.  However, for exact enumeration purposes it is of course interesting to have exact general results.  

\section{Conclusions}
We have seen that it is possible to define irreducibility not only for $(v_k)$ configuration, but for $(v_r,b_k)$\hyp{}configurations in general. 
Augmentation and reduction constructions for $(v_r,b_k)$\hyp{}configurations have been defined in a general manner, and several general results on augmentability and reducibility have been described.  
Irreducibility has been proved for configurations with point deficiency $\delta_p<k-(r+k)/\gcd(r,k)$ or line deficiency $\delta_l<r-(r+k)/\gcd(r,k)$, and for (some) transversal designs $TD_1(k,n)$. A $TD_1(k,n)$ has point deficiency $n-1=r-1$ and line deficiency $r^2-rk+k-1$. For $r=k=3$, these are the only irreducible configurations, and at this point, no other irreducible configurations are known in the general case. There is an upper bound for irreducibility requiring the number of lines to satisfy  $b< 1+r+r(k-1)(r-1)+r(r-1)^2(k-1)^2$. This bound is not sharp, and a better bound would probably set the point deficiency closer to $r$. 

The author is aware of at least two applications of augmentations and reductions of configurations. One is the enumeration of configurations, the other is the use of configurations in cryptography and coding theory. When a configuration is used to define a key-distribution scheme, and new parties join or leave, augmentation and reduction constructions can modify the structure while minimizing the costs of key-reassignment. 
However, it is important to be aware of the fact that the constructions described in this paper may fail to preserve interesting properties. 

\section*{Acknowledgements}
The author would like to thank Mark Ellingham for pointing out that not only the augmentation but also the reduction can result in a disconnected configuration. 
Partial financial support is acknowledged from the Spanish MEC project ICWT (TIN2012-32757) and ARES (CONSOLIDER INGENIO 2010 CSD2007-00004), as well as from the Swedish National Graduate School in Computer Science (CUGS).


\begin{thebibliography}{99}
\bibitem{boben_arxiv} M. Boben (2005) Reductions of $(v_3)$ configurations. arXiv:math/0505136v1. 
\bibitem{boben_graphs} M. Boben. Irreducible $(v_3)$ configurations and graphs. Discrete Mathematics 307 (2007)  331 -- 344. 
\bibitem{carstens} H. G. Carstens, T. Dinski and E. Steffen. Reduction of symmetric configurations $n_3$. Discrete Appl. Math. 99 (2000)  401--411.
\bibitem{Davydov} A.A. Davydov, G. Faina, M. Giulietti, S. Marcugini, F. Pambianco. On constructions and parameters of symmetric configurations $v_{k}$. Arxiv:1203.0709v1 (2012).
\bibitem{Gropp} H. Gropp. Configurations. In the Second Edition of C.J. Colbourn, J.H. Dinitz (Eds.) ``The CRC Handbook Of Combinatorial Designs'', CRC Press, Boca Raton, FL, 2007, 352--355.
\bibitem{Grunbaum} B. Gr{\"u}nbaum,  ``Configurations of Points and Lines.'' American Mathematical Society, Providence, RI, 2009.
\bibitem{martinetti_1887} V. Martinetti, Sulle configurazioni piane $\mu_3$, Annali di mat. pura ed applicata 15 (1887) 1--26. 
\bibitem{erratum} E. Steffen, T. Pisanski, M. Boben and N. Ravnik.  Erratum to [``Reduction of symmetric configurations $n_3$'' Discrete Appl. Math. 99 (1--3) (2000) 401 --411]. Discrete Applied Mathematics 154 (2006)  1645 -- 1646.  
\bibitem{Riemann} K. Stokes and M. Bras-Amor\'os. Patterns in semigroups associated with combinatorial configurations. In  M. Izquierdo, S.A. Broughton, A.F. Costa, R.E. Rodr\'iguez (Eds), ``Riemann and Klein Surfaces, Automorphisms, Symmetries and Moduli Spaces'', Contemporary Mathematics 629, American Mathematical Society, Providence, RI, 2014,  323--333. 
\end{thebibliography}
\end{document}